\def\blfootnote{\xdef\@thefnmark{}\@footnotetext}
\newcommand\ccnote{
    \blfootnote{\copyright\,\, Bo'az Klartag}
    \blfootnote{\ccLogo\, \ccAttribution\,\, Licensed under a \href{https://creativecommons.org/licenses/by/4.0/}{Creative Commons Attribution License (CC-BY)}.}
}
\numberwithin{equation}{section}
\renewcommand{\leq}{\leqslant}
\renewcommand{\geq}{\geqslant}
\renewcommand{\mathbb}{\varmathbb}
\newtheorem{theorem}{Theorem}[section]
\newtheorem{lemma}[theorem]{Lemma}
\newtheorem{corollary}[theorem]{Corollary}
\newtheorem{proposition}[theorem]{Proposition}
\newtheorem{remark}[theorem]{Remark}
\def \vs {\smallskip}
\def \id {{\rm Id}}
\def \RR {\mathbb R}
\def \EE {\mathbb E}
\def \eps {\varepsilon}
\def \vphi {\varphi}
\def \cF {\mathcal F}
\address{
	Bo'az Klartag, Department of Mathematics,
	Weizmann Institute of Science,
	Rehovot, Israel}
\email{boaz.klartag@weizmann.ac.il}
\begin{document}

\thispagestyle{empty}

\begin{minipage}{0.28\textwidth}
\begin{figure}[H]
\includegraphics[width=2.5cm,height=2.5cm,left]{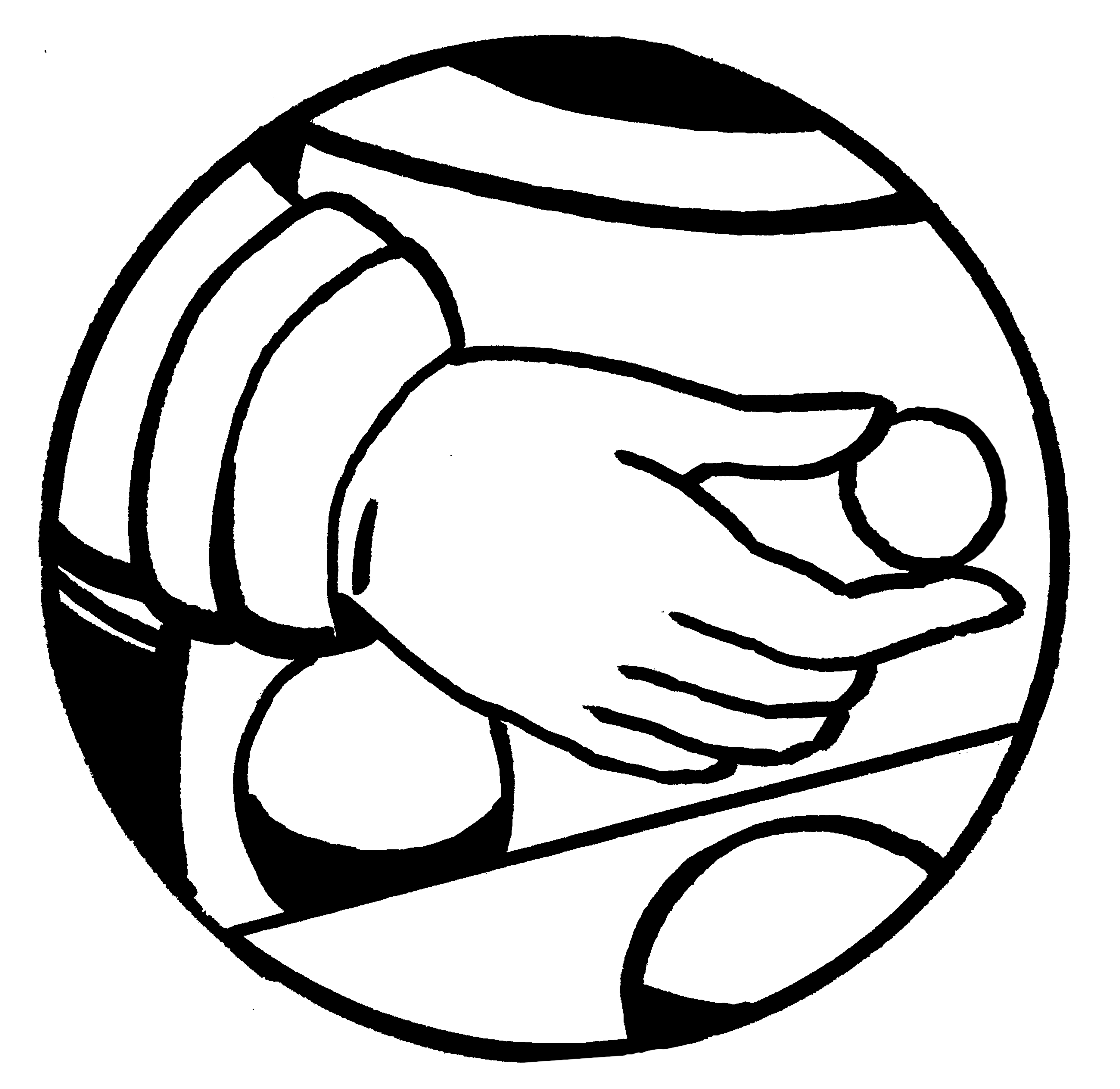}
\end{figure}
\end{minipage}
\begin{minipage}{0.7\textwidth}
\begin{flushright}
Ars Inveniendi Analytica (2023), Paper No. 4, 17 pp.
\\
DOI 10.15781/jsjy-0b06
\\
ISSN: 2769-8505
\end{flushright}
\end{minipage}

\ccnote

\vspace{1cm}


\begin{center}
\begin{huge}
\textit{Logarithmic bounds for isoperimetry and slices of convex sets}


\end{huge}
\end{center}

\vspace{1cm}


\begin{center}
{\large{\bf{Bo'az Klartag}}} \\
\vskip0.15cm
\footnotesize{Weizmann Institute of Science}
\end{center}

\vspace{1cm}


\begin{center}
\noindent \em{Communicated by Emanuel Milman}
\end{center}
\vspace{1cm}


\noindent \textbf{Abstract.} \textit{We prove that
	the Bourgain slicing conjecture and the Kannan-Lov\'asz-Simonovits (KLS) isoperimetric conjecture
	in $\RR^n$ hold true up to a factor of $\sqrt{\log n}$. A new ingredient used in the proof
	is an improved log-concave Lichnerowicz inequality.}
\vskip0.3cm

\noindent \textbf{Keywords.} Isoperimetric inequalities, hyperplane sections, high-dimensional convex sets.
\vspace{0.5cm}


\section{Introduction}

Bourgain's slicing problem asks whether any convex body $K \subseteq \RR^n$ of volume
one admits a hyperplane $H \subseteq \RR^n$ such that
$$ Vol_{n-1}(K \cap H) > c, $$
for a universal constant $c > 0$.
Here $Vol_{n-1}$ stands for $(n-1)$-dimensional volume and a hyperplane is a one-codimensional affine subspace.
We refer the reader to Klartag and Milman \cite{KM} and references therein for background on this problem, its equivalent formulations
and its applications. For $n \geq 2$ define
$$ \frac{1}{L_n} = \inf_{K \subseteq \RR^n} \sup_{H \subseteq \RR^n} Vol_{n-1}(K \cap H), $$
where the infimum runs over all convex bodies $K \subseteq \RR^n$ of volume one, and the supremum  runs over all hyperplanes $H \subseteq \RR^n$. Thus, a convex body of volume one in $\RR^n$ has a hyperplane section whose $(n-1)$-dimensional volume is at least $1 / L_n$,
and Bourgain's slicing problem asks whether $L_n < C$ for a universal constant $C > 0$.

\vs
For decades the best known bounds have been $L_n \leq C n^{1/4} \log n$ proven in Bourgain \cite{B1, B2} and $L_n \leq C n^{1/4}$ proven in \cite{K_quarter}.
Two years ago, a breakthrough by Chen \cite{chen}  led to the bound $L_n \leq \exp( C \sqrt{ \log n \cdot \log \log n} )$
which was subsequently improved to $L_n \leq C \log^4 n$ by Klartag and Lehec \cite{KL}, to $L_n \leq C \log^{2.223...} n$
by Jambulapati, Lee and Vempala \cite{JLV} and to $L_n \leq C \log^{2.082...}$ by Lehec (personal communication).
In this paper we obtain a further improvement:

\begin{theorem} For $n \geq 2$, $$ L_n \leq C \sqrt{\log n}, $$
	where $C > 0$ is a universal constant.
	\label{thm1}
\end{theorem}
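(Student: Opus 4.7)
My plan is to reduce the slicing estimate to a Poincar\'e-type bound for isotropic log-concave measures, and to establish that bound via Eldan's stochastic localization combined with a refined log-concave Lichnerowicz inequality---the ``new ingredient'' advertised in the abstract.

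First, the Eldan--Klartag reduction yields $L_n \leq C \sigma_n$, where $\sigma_n$ is the worst-case thin-shell constant over isotropic log-concave probability measures on $\RR^n$. Testing the Poincar\'e inequality against $x \mapsto |x|^2$ gives $\sigma_n^2 \leq C \cdot \sup_\mu C_P(\mu)$, so it is enough to prove $C_P(\mu) \leq C \log n$ for every isotropic log-concave $\mu$ on $\RR^n$; the square root of this bound is exactly the target $\sqrt{\log n}$.

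To that end I would run Eldan's stochastic localization on such a $\mu$. This produces a martingale of log-concave tilts $\mu_t(dx) \propto \exp(-t|x|^2/2 + \langle W_t, x\rangle)\, \mu(dx)$, whose covariance matrix $A_t = \cov(\mu_t)$ solves an It\^o SDE with drift controlled by Poincar\'e-type functionals of $\mu_t$ itself. The strategy, as in \cite{chen, KL, JLV}, is to show that $\|A_t\|_{\mathrm{op}}$ stays bounded by a constant until time $T \asymp 1/\log n$. Once this is established, $\mu_T$ is log-concave with Hessian of its potential at least $T \cdot I$, so the Brascamp--Lieb inequality gives $C_P(\mu_T) \leq 1/T \leq C \log n$, and a standard mixture/martingale-convergence argument lifts the Poincar\'e constant from the tilted measures back to $\mu$. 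Each of the prior works enlarges the available $T$ by sharpening the moment analysis of $A_t$; here I would use the improved log-concave Lichnerowicz inequality to replace the classical bound $C_P(\mu_t) \leq 1/t$ used in the Gr\"onwall closure by a finer inequality that sees the full spectrum of $A_t$ rather than only $\|A_t\|_{\mathrm{op}}$. This extra spectral information is precisely what is needed to carry the bootstrap all the way down to $T \asymp 1/\log n$.

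The conceptual heart of the argument, and the step I would expect to fight with the most, is the improved Lichnerowicz inequality itself. I would attack it through a Bochner-type identity for the diffusion with stationary measure $\mu_t$, retaining the off-diagonal interactions between the Hessian of the potential and the covariance $A_t$; integration by parts combined with the Brascamp--Lieb covariance representation should yield a bound on the relevant quadratic functional in terms of $\tr A_t$ rather than $n \|A_t\|_{\mathrm{op}}$. This dimensional saving is exactly the factor that brings the stochastic-localization analysis down to $T \asymp 1/\log n$ and hence produces the $\sqrt{\log n}$ bound on $L_n$.
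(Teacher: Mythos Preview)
Your outline has the right cast of characters but misassigns their roles, and the form you guess for the improved Lichnerowicz inequality is not the one that drives the argument. In the paper the improved inequality is
\[
C_P(\nu)\;\le\;\sqrt{\frac{\|\cov(\nu)\|_{op}}{t}}
\]
for any $t$-uniformly log-concave $\nu$ --- a geometric mean of the classical Lichnerowicz bound $1/t$ and the KLS-conjectural bound $\|\cov(\nu)\|_{op}$. It involves only $\|A_t\|_{op}$, not $\tr A_t$; there is no ``trace versus $n\|A_t\|_{op}$'' saving, and the Bochner computation on the first eigenfunction (Proposition~\ref{prop_1432}) yields exactly this square-root form and nothing spectral beyond the top eigenvalue of $A_t$.

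More importantly, the paper does \emph{not} use this inequality to push the time window for $\|A_t\|_{op}\le C$ out to $T\asymp 1/\log n$. The covariance control is taken as a black box from prior work: $\EE\|A_t\|_{op}\le C$ is known only for $t\le c\,\psi_n^{-2}/\log n$, and this is \emph{not} improved here. The new inequality enters instead at the transfer step you describe as ``Brascamp--Lieb gives $C_P(\mu_T)\le 1/T$'': one replaces that classical bound by $C_P(\mu_t)\le\sqrt{\|A_t\|_{op}/t}$, whence (via Lemma~\ref{lem_1131} / Corollary~\ref{cor_1804}) $C_P(\mu)\lesssim 1/\sqrt{t}\asymp \psi_n\sqrt{\log n}$ at the admissible time $t=c\,\psi_n^{-2}/\log n$. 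The final step, which your plan omits entirely, is a self-referential bootstrap: since $\psi_n^2\lesssim \sup_\mu C_P(\mu)\lesssim \psi_n\sqrt{\log n}$, one divides through to get $\psi_n\lesssim\sqrt{\log n}$. Without this bootstrap your scheme is circular --- you would need $T\asymp 1/\log n$ a priori, but the available covariance bound only reaches $T\asymp \psi_n^{-2}/\log n$, and the improved Lichnerowicz inequality does not feed into the Gr\"onwall/third-moment analysis that governs that window.
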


A probability density $\rho: \RR^n \rightarrow [0, \infty)$ is {\it log-concave} if its support $\{ x \in
\RR^n \, ; \, \rho(x) > 0 \}$ is a convex set and $\log \rho$ is concave in the support of $\rho$.
A random vector $X$ in $\RR^n$ is log-concave if it is supported in an affine
subspace and has a log-concave density in this subspace. The uniform distribution on a convex body is log-concave,
as well as the Gaussian distributions. A random vector $X = (X_1,\ldots,X_n) \in \RR^n$ is {\it isotropic}
if it has finite second moments and for $i,j=1,\ldots,n$,
$$ \EE X_i = 0 \qquad \text{and} \qquad \EE X_i X_j = \delta_{ij} $$
where $\delta_{ij}$ is Kronecker's delta.
When we say that a Borel probability measure $\mu$ on $\RR^n$ is log-concave or is isotropic, we mean that the random vector $X$ with law $\mu$ has the corresponding property.
When $\mu$ is isotropic and has a log-concave density $\rho$, for any hyperplane $H \subseteq \RR^n$ passing through the origin,
\begin{equation} \frac{1}{\sqrt{12}}  \leq \int_{H} \rho \leq \frac{1}{\sqrt{2}}.
\label{eq_1708} \end{equation}
This is proven in Hensley \cite{hensley} and Fradelizi \cite{fradelizi}, using the Pr\'ekopa-Leindler inequality and one-dimensional analysis.
Moreover, if $H^+ \subseteq \RR^n$ is a half-space whose boundary is $H$ then Gr\"unbaum's theorem \cite{G}
states that
\begin{equation}
\frac{1}{e} \leq \mu(H^+) \leq 1 - \frac{1}{e}. \label{eq_1707}
\end{equation}
Thus the boundary measure of any such half-space has the same order of magnitude
as the measure of the half-space itself. In fact, it suffices to assume that $\mu$ is centered for (\ref{eq_1707}) to hold true, with no need for isotropicity.

\vs The {\it Poincar\'e constant} of a random vector $X$ in $\RR^n$, denoted by $C_P(X)$, is the infimum over all $C \geq 0$ such that for any locally-Lipschitz function $f: \RR^n \rightarrow \RR$
satisfying $\EE |\nabla f(X)|^2 < \infty$,
\begin{equation} Var( f(X) ) \leq C \cdot \EE |\nabla f(X)|^2. \label{eq_2154} \end{equation}
Given a probability measure $\mu$ in $\RR^n$ we write $C_P(\mu) = C_P(X)$ where $X$ is the random vector with law $\mu$.
When $\mu$ has a log-concave density $\rho$, its {\it isoperimetric constant}
is
$$ \frac{1}{\psi_{\mu}} = \inf_{A \subseteq \RR^n} \left\{ \frac{\int_{\partial A} \rho}{\min \{ \mu(A), 1 - \mu(A) \}} \right\} $$
where the infimum runs over all open sets $A \subseteq \RR^n$ with smooth boundary
satisfying  $0 < \mu(A) < 1$. This isoperimetric constant is finite and positive, as proven in Bobkov \cite{bobkov}.
The inequalities of Cheeger \cite{cheeger} and Buser and Ledoux \cite{buser, ledoux} imply that
the Poincar\'e constant and the isoperimetric constant are closely related. Namely,
\begin{equation}
\frac{1}{4} \leq \frac{\psi_{\mu}^2}{C_P(\mu)} \leq \pi.
\label{eq_1103}
\end{equation}
The  value of the numerical constant on the right-hand side of (\ref{eq_1103}) was found by De Ponti and Mondino \cite{DM} using the technique from Ledoux \cite{ledoux}.
We define
\begin{equation} \psi_n = \sup_{\mu} \psi_{\mu} \label{eq_1104} \end{equation}
where the supremum runs over all isotropic, log-concave probability measures in $\RR^n$.
The Kannan-Lov\'asz-Simonovits (KLS) conjecture \cite{KLS} suggests that $\psi_n < C$ for a universal constant $C > 0$. If correct
then it would imply that the most efficient way to partition a convex body into two pieces of equal mass
so as to minimize their interface is a hyperplane bisection, up to a universal constant.

\vs The fact that the KLS conjecture implies Bourgain's slicing conjecture
was  announced by Ball in 2003; the proof, using the heat equation and yielding
the estimate $L_n \leq \exp(C \psi_n^2)$,
was published in the work of  Ball and Nguyen \cite{BN}.
In the meantime, Eldan and Klartag \cite{EK} used the logarithmic Laplace transform and proved the  bound
\begin{equation}
L_n \leq C \psi_n,
\label{eq_1132}
\end{equation}
where $C > 0$ is a universal constant.
Substantial  progress towards the KLS conjecture
started roughly a decade ago, with Eldan's stochastic localization method \cite{Eldan1}.
By utilizing this method, Lee and Vempala \cite{LV} proved the bound $\psi_n \leq C n^{1/4}$, that was improved
to $\psi_n \leq \exp( C \sqrt{ \log n \cdot \log \log n} )$ by Chen \cite{chen},
to $\psi_n \leq C \log^5 n$ by Klartag and Lehec \cite{KL}, to $\psi_n \leq C \log^{3.223...} n$
by Jambulapati, Lee and Vempala \cite{JLV} and to $\psi_n \leq C \log^{3.082...}$ by Lehec (personal communication).
Theorem \ref{thm1} follows  from (\ref{eq_1132}) and the following:

\begin{theorem} For $n \geq 2$, $$ \psi_n \leq C \sqrt{\log n}, $$
	where $C > 0$ is a universal constant.
	\label{thm2}
\end{theorem}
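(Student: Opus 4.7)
\emph{Proof proposal.}\, My plan is to follow the stochastic localization framework of Eldan, as refined by Chen and by Klartag-Lehec, but to replace the standard Lichnerowicz input $C_P(\mu_t)\leq 1/t$ by the improved log-concave Lichnerowicz inequality advertised in the abstract. By the Cheeger-Buser relation (\ref{eq_1103}) it is enough to establish $C_P(\mu) \leq C\log n$ for every isotropic log-concave probability measure $\mu$ on $\RR^n$, and I would fix such a $\mu$ throughout.

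\emph{Setup.}\, Run Eldan's stochastic localization SDE to obtain the martingale of tilted log-concave probability measures
\[
d\mu_t(x) \propto \exp\!\left(\langle \xi_t, x\rangle - \tfrac{t}{2}|x|^2\right)d\mu(x), \qquad d\xi_t = A_t^{1/2}\, dW_t,
\]
where $A_t = \cov(\mu_t)$ and $(W_t)$ is a standard Brownian motion in $\RR^n$. Standard covariance identities link $\cov(\mu)= I_n$ to $\EE A_t$ and to $\EE\int_0^t A_s^2\, ds$, while It\^o's formula on $\|A_t\|_{\mathrm{op}}$ yields a differential inequality whose drift is controlled by $C_P(\mu_t)$. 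The naive input at this stage is that $\mu_t$ is $t$-uniformly log-concave, so by classical Lichnerowicz $C_P(\mu_t)\leq 1/t$; substituted directly into the SDE this only recovers the polylogarithmic bounds of Chen, Klartag-Lehec, Jambulapati-Lee-Vempala and Lehec.

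\emph{Key step.}\, I would instead prove and feed in a sharper inequality of roughly the shape
\[
C_P(\nu) \leq C\, \|A\|_{\mathrm{op}}\cdot\log\!\left(e + \frac{1}{t\,\|A\|_{\mathrm{op}}}\right)
\]
for any log-concave $\nu$ with $\mathrm{Hess}(-\log \rho_\nu) \geq t\, I_n$ and $A=\cov(\nu)$, or a trace-type refinement with comparable logarithmic gain. Applied to $\nu = \mu_t$, this couples $C_P(\mu_t)$ directly to $\|A_t\|_{\mathrm{op}}$ with only a logarithmic correction, rather than to the much cruder $1/t$. A Gr\"onwall/bootstrap argument on the SDE for $\|A_t\|_{\mathrm{op}}$, carried out up to a horizon $t \asymp 1/\log n$, should then keep $\|A_t\|_{\mathrm{op}}$ within a constant factor of its initial value $\|A_0\|_{\mathrm{op}} = 1$. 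Inserting this into the covariance identity gives $C_P(\mu) \leq C\log n$, and taking square roots via (\ref{eq_1103}) yields the desired $\psi_n \leq C\sqrt{\log n}$.

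\emph{Main obstacle.}\, The hard part will be establishing the improved log-concave Lichnerowicz inequality itself, since the bare $\Gamma_2$-bound $C_P\leq 1/t$ is already sharp on the Gaussian $\cN(0, t^{-1}I_n)$. Any genuine improvement must therefore exploit structure beyond uniform log-concavity --- most naturally the non-negativity of $\mathrm{Hess}(-\log \rho_\nu)$ across \emph{all} directions together with explicit information on the covariance $A$. The two attacks I would try are (i) a refined Bochner identity in which the usual $(\Delta f)^2/n$ bound is replaced by a finer eigenvalue splitting of $\mathrm{Hess}\, f$ that sees $A$, and (ii) a transport-interpolation between $\nu$ and a Gaussian of covariance $A$, converting one factor of $1/t$ into $\|A\|_{\mathrm{op}}$ at the cost of a logarithm. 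A secondary technical difficulty is carefully propagating this improved inner bound through the matrix-valued SDE for $A_t$, including the martingale and quadratic-variation terms, which will likely require fresh a priori control on the growth of the top eigenvalue of $A_t$.
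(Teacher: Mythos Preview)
Your overall architecture --- stochastic localization plus an improved Lichnerowicz bound --- is the right one, but two of the three moving parts are misidentified.

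\textbf{(1) The improved Lichnerowicz inequality has a different form.} The paper proves the clean geometric-mean bound
\[
C_P(\nu) \leq \sqrt{\frac{\|\cov(\nu)\|_{op}}{t}}
\]
for any $t$-uniformly log-concave $\nu$, with no logarithm. The proof is a short Bochner argument on the first eigenfunction $f$: applying Poincar\'e to each $\partial_i f$ inside the Bochner identity gives
\[
\int (\nabla^2\psi)\nabla f\cdot\nabla f\, d\nu \;\leq\; \lambda\Bigl|\int \nabla f\, d\nu\Bigr|^2,
\]
and then integration by parts $\int \nabla f\, d\nu = \lambda \int f(x)\, x\, d\nu$ plus Cauchy--Schwarz bounds the right-hand side by $\lambda^3\|\cov(\nu)\|_{op}$. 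Combined with the lower bound $t\lambda$ on the left, this yields $\lambda^2\|\cov(\nu)\|_{op}\geq t$. Neither of your proposed attacks (a refined $\Gamma_2$ splitting, or transport-interpolation with a logarithmic loss) is needed.

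\textbf{(2) The improved bound is not fed into the SDE for $\|A_t\|_{op}$.} This is the main structural divergence. The paper does \emph{not} improve the covariance control or extend its time horizon; the existing bound $\EE\|A_t\|_{op}\leq C$ for $t\leq c\,\psi_n^{-2}/\log n$ is imported unchanged from earlier work (Klartag--Lehec, Chen). The improved Lichnerowicz enters only \emph{after} that, in a separate ``variance transfer'' step you do not mention: one shows (via subadditivity of the Poincar\'e constant of the tilt process $\theta_t \stackrel{d}{=} tX+W_t$) that
\[
Var_{\mu}(f) \;\leq\; \Bigl(2+\tfrac{t}{\lambda_0}\Bigr)\,\EE\, Var_{p_t}(f),
\]
which, for $t\lesssim \lambda_0$, combines with E.~Milman's Lipschitz characterization of $\lambda_0$ to give $\lambda_0^{-1}\leq C\,\EE\,\lambda_t^{-1}$. \emph{Now} the improved Lichnerowicz is applied pointwise to $\mu_t$: instead of $\lambda_t^{-1}\leq 1/t$, one uses $\lambda_t^{-1}\leq \sqrt{\|A_t\|_{op}/t}$, so that
\[
\lambda_0^{-1}\;\leq\; \frac{C}{\sqrt t}\,\EE\sqrt{\|A_t\|_{op}}\;\leq\; \frac{C'}{\sqrt t}
\quad\text{at}\quad t=c\,\psi_n^{-2}/\log n.
\]
This gives $\lambda_0^{-1}\leq C\,\psi_n\sqrt{\log n}$, and choosing $\mu$ near-extremal (so $\psi_n^2\leq C\lambda_0^{-1}$) closes the self-referential loop: $\psi_n^2\leq C\psi_n\sqrt{\log n}$, hence $\psi_n\leq C\sqrt{\log n}$.

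In short: the gain of one factor $\sqrt t$ comes from replacing $\lambda_t^{-1}\leq 1/t$ by $\lambda_t^{-1}\leq \sqrt{\|A_t\|_{op}/t}$ in the post-localization spectral comparison, not from a Gr\"onwall improvement of the covariance process itself; and the variance-transfer lemma linking $\lambda_0$ to $\EE\lambda_t^{-1}$ is an essential ingredient your outline omits.
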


Already in the  case of unconditional convex bodies,
Theorem \ref{thm2} improves upon
the previously known bound. That bound was obtained in \cite{K_uncond}.

\vs Our  new ingredient is an improved log-concave Lichnerowicz inequality.
We say that a probability measure $\mu$ in $\RR^n$ is {\it $t$-uniformly log-concave}, for $t \geq 0$, if
the probability measure $\nu$ whose density with respect to $\mu$ is proportional to $e^{t |x|^2/2}$ exists and is log-concave.
The {\it log-concave Lichnerowicz inequality} states that for any $t > 0$ and a $t$-uniformly log-concave probability measure $\mu$
in $\RR^n$,
\begin{equation}   C_P(\mu) \leq 1/t. \label{eq_154_} \end{equation}
The name of this  inequality stems from an analogy to investigations by Lichnerowicz  in Riemannian geometry, see \cite{BGL, ledoux1}.
Inequality (\ref{eq_154_}) also follows  from the Brascamp-Lieb inequality \cite{BL}.
Write $Cov(\mu) = (Cov_{ij}(\mu))_{i,j=1,\ldots,n} \in \RR^{n \times n}$ for the covariance matrix of the log-concave probability measure $\mu$, defined via
$$ Cov_{ij}(\mu) = \int_{\RR^n} x_i x_j d \mu(x) - \int_{\RR^n} x_i d \mu(x) \int_{\RR^n} x_j d \mu(x). $$
The covariance matrix is well-defined, since a log-concave probability density decays exponentially at infinity (e.g. \cite[Lemma 2.2.1]{BGPP}).
The covariance matrix is a symmetric, positive semi-definite matrix, and its operator norm is denoted by $\| Cov(\mu) \|_{op}$.

\begin{theorem}[``improved log-concave Lichnerowicz inequality''] Let $t > 0$ and let $\mu$ be a $t$-uniformly log-concave probability measure in $\RR^n$.
	Then,
	\begin{equation}
	\label{eq_1518}
	C_P(\mu) \leq \sqrt{ \frac{\| Cov(\mu) \|_{op}}{t} } \leq \frac{1}{t}.
	\end{equation}
	\label{prop_1944}
\end{theorem}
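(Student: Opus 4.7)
The plan is to bootstrap the standard log-concave Lichnerowicz bound (\ref{eq_154_}) by analyzing a Poincar\'e eigenfunction $f_1$ of $\mu$ in two complementary ways: first, a Stein-type identity -- obtained by testing the eigenvalue equation against linear functions -- that ties its mean gradient to $Cov(\mu)$; second, the Bochner/$\Gamma_2$ identity -- using $t$-uniform log-concavity -- that controls its Hessian norm. Coupling these via a second application of Poincar\'e, now to each partial derivative of $f_1$, produces the claimed geometric mean estimate.

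After a standard Gaussian-smoothing and truncation reduction, assume $\mu$ has a smooth density $e^{-V}$ with $\nabla^2 V \ge tI$. The generator $L = \Delta - \nabla V \cdot \nabla$ then has discrete spectrum on the orthogonal complement of constants in $L^2(\mu)$, with smallest eigenvalue $\lambda_1 = 1/C_P(\mu)$ attained by a smooth eigenfunction $f_1$; normalize so that $\int f_1 \, d\mu = 0$ and $\int f_1^2 \, d\mu = 1$, giving $\int |\nabla f_1|^2 \, d\mu = \lambda_1$ and $\int (Lf_1)^2 \, d\mu = \lambda_1^2$. Testing $-Lf_1 = \lambda_1 f_1$ against the coordinate function $x_i$ and using the self-adjointness of $L$ in $L^2(\mu)$ gives
\[
\int \partial_i f_1 \, d\mu \;=\; \int \nabla x_i \cdot \nabla f_1 \, d\mu \;=\; \int x_i \, (-L f_1) \, d\mu \;=\; \lambda_1 \int x_i f_1 \, d\mu,
\]
i.e., setting $m := \int \nabla f_1 \, d\mu \in \RR^n$ and $w := \int x \, f_1(x) \, d\mu(x) \in \RR^n$, one has $m = \lambda_1 w$. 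The Bochner identity $\int (Lf_1)^2 d\mu = \int |\mathrm{Hess}\, f_1|^2 d\mu + \int \langle \nabla^2 V \nabla f_1, \nabla f_1\rangle d\mu$, combined with $\nabla^2 V \ge tI$, yields $\int |\mathrm{Hess}\, f_1|^2 d\mu \le \lambda_1^2 - t\lambda_1$.

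Now apply the Poincar\'e inequality of $\mu$ to each $\partial_i f_1$ and sum over $i$:
\[
\lambda_1 - |m|^2 \;=\; \sum_{i=1}^n Var_\mu(\partial_i f_1) \;\le\; C_P(\mu) \int |\mathrm{Hess}\, f_1|^2 \, d\mu \;\le\; \lambda_1 - t,
\]
whence $|m|^2 \ge t$. Since $\int f_1 \, d\mu = 0$ and $\int f_1^2 \, d\mu = 1$, Cauchy-Schwarz gives, for each unit $u \in \RR^n$, $|\langle u, w\rangle|^2 = |Cov(\langle u, X\rangle, f_1(X))|^2 \le \langle Cov(\mu) u, u\rangle$, so $|w|^2 \le \|Cov(\mu)\|_{op}$. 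Combining with $m = \lambda_1 w$ yields $t \le \lambda_1^2 \|Cov(\mu)\|_{op}$, that is, $C_P(\mu) = 1/\lambda_1 \le \sqrt{\|Cov(\mu)\|_{op}/t}$, which is the first inequality in (\ref{eq_1518}). The second inequality is immediate from (\ref{eq_154_}) applied to coordinate functions, which gives $\|Cov(\mu)\|_{op} \le 1/t$. The only real obstacle I foresee is the technical one of securing enough regularity for $f_1$ to validate the Bochner identity and all integrations by parts, which is handled by the Gaussian-smoothing/truncation approximation used at the outset.
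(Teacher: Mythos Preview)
Your proof is correct and follows essentially the same approach as the paper: both arguments combine the Bochner formula with the Poincar\'e inequality applied to the partial derivatives of the first eigenfunction, use the identity $\int \nabla f_1\,d\mu = \lambda_1 \int x f_1\,d\mu$ obtained by integrating by parts against linear functions, and finish with Cauchy--Schwarz against the covariance. The only cosmetic difference is the order of operations: the paper keeps the full curvature term $\int (\nabla^2 \psi)\nabla f\cdot\nabla f\,d\mu$ through the Bochner/Poincar\'e step (its Proposition~\ref{prop_1432}) and applies $\nabla^2\psi\ge t\,\id$ afterward, whereas you invoke $\nabla^2 V\ge tI$ immediately to bound $\int|\mathrm{Hess}\,f_1|^2\,d\mu\le\lambda_1^2-t\lambda_1$; the resulting inequality $|m|^2\ge t$ is identical in both routes.
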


The bound in (\ref{eq_1518}) is the geometric average of the Lichnerowicz bound (\ref{eq_154_}) with the KLS conjectural bound
$C_P(\mu) \lesssim \| Cov(\mu) \|_{op}$, where $A \lesssim B$ is a shorthand for $A \leq C \cdot B$ where $C > 0$ is a universal constant. Theorem \ref{prop_1944} is proven in Section \ref{sec2} by using
the Bochner formula in order to analyze the first non-trivial eigenfunction of the Laplacian associated with $\mu$. The proof of Theorem \ref{thm2} is given
in Section \ref{sec3}, and it combines Theorem  \ref{prop_1944} with some
properties of Eldan's stochastic localization.

\vs
Our usage of stochastic localization is relatively ``soft'' compared to recent works in this field.
Stochastic processes are crucial  in
this proof only as a tool for bounding  the covariance process up until a certain time determined by $\psi_n$, see the bound (\ref{eq_1732}) below.
It could be interesting to try and replace the role of stochastic processes in this argument with differentiations along the heat flow, integrations by parts, and
$3$-tensor analysis.

\vs We write $\nabla^2 u$ for the Hessian matrix
of the function $u:\RR^n \rightarrow \RR$.
The  scalar product of $x,y \in \RR^n$ is $x \cdot y = \langle x, y \rangle = \sum_i x_i y_i$.
The Euclidean norm of $x \in \RR^n$ is $|x| = \sqrt{x \cdot x}$, and $S^{n-1} = \{ x \in \RR^n \, ; \, |x| = 1 \}$
is the unit sphere centered at the origin. A smooth function is $C^{\infty}$-smooth,
and $\log$ stands for the natural logarithm. We write $C, c, \tilde{C}, \tilde{c}, \bar{C}$ etc. to denote various positive universal constants whose value may change from one line to the next.

\vs {\it Acknowledgements.} I am grateful to Richard Gardner, Joseph Lehec, Emanuel Milman, Sasha Sodin and Ramon van Handel
for their comments on an earlier version of this paper and for pointing me to references of which I was previously unaware. Supported by a grant from the Israel Science Foundation (ISF).

\section{Eigenfunctions with a preferred direction}
\label{sec2}

Let $\mu$ be a probability measure in $\RR^n$ with a log-concave probability density $\rho$.
We say that $\mu$ is {\it regular} if its density $\rho$ is smooth and positive in $\RR^n$
and the following two requirements hold:
\begin{enumerate}
	\item[(i)] There exists $\eps > 0$ such that for all $x \in \RR^n$, denoting $\psi = -\log \rho$,
	\begin{equation}
	\eps \cdot \id \leq \nabla^2 \psi(x) \leq \frac{1}{\eps} \cdot \id
	\label{eq_959}
	\end{equation}
	in the sense of symmetric matrices.
	\item[(ii)] The function $\psi$, as well as each of its partial derivatives, grows at most polynomially at infinity.
\end{enumerate}

\begin{lemma} Let $\mu$ be an absolutely-continuous, log-concave probability measure in $\RR^n$ and let $0 < \eps < t$.
	Then there exists a regular, log-concave probability measure $\nu$ in $\RR^n$ such that
	$$ C_P(\nu) \geq C_P(\mu) - \eps \qquad \text{and} \qquad \| Cov(\mu) - Cov(\nu) \|_{op} < \eps. $$
	Moreover, if $\mu$ is $t$-uniformly log-concave, then $\nu$ is $(t - \eps)$-uniformly log-concave.
	\label{lem_approx}
\end{lemma}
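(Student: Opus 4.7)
The plan is to construct $\nu$ from $\mu$ in two successive steps, each controlled by an auxiliary small parameter chosen depending on $\eps$ and $t$: first a Gaussian convolution to smoothen and cap the Hessian of $-\log\rho$ from above, then multiplication by a narrow Gaussian to install a strictly positive lower Hessian bound.

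\textbf{Step 1: Gaussian convolution.} Fix $\delta > 0$ small and let $\mu^{(1)} = \mu \ast \gamma_{\delta^2}$, where $\gamma_{\delta^2}$ is the centered Gaussian on $\RR^n$ with covariance $\delta^2 \id$, and denote its density by $\rho^{(1)}$. Then $\rho^{(1)}$ is $C^\infty$, strictly positive, and log-concave by Pr\'ekopa-Leindler. Differentiating $\log \rho^{(1)}$ twice in $x$ yields
$$ \nabla^2(-\log\rho^{(1)})(x) \;=\; \tfrac{1}{\delta^2}\id \;-\; \tfrac{1}{\delta^4}\,Cov(p_x), \qquad p_x(y) \propto \rho(y)\,e^{-|x-y|^2/(2\delta^2)}, $$
so $0 \le \nabla^2(-\log\rho^{(1)}) \le \delta^{-2}\id$. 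If $\mu$ is $t$-uniformly log-concave, then $p_x$ is $(t+\delta^{-2})$-uniformly log-concave, and the Brascamp-Lieb inequality gives $Cov(p_x) \le (t+\delta^{-2})^{-1}\id$, hence $\nabla^2(-\log\rho^{(1)}) \ge \frac{t}{1+t\delta^2}\id$. Also $Cov(\mu^{(1)}) = Cov(\mu) + \delta^2\id$.

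\textbf{Step 2: Gaussian multiplier and regularity.} Fix $\alpha > 0$ small and let $\nu$ have density $\rho^\nu(x) \propto \rho^{(1)}(x)\,e^{-\alpha|x|^2/2}$. Then
$$ \alpha\,\id \;\le\; \nabla^2(-\log\rho^\nu)(x) \;=\; \nabla^2(-\log\rho^{(1)})(x) + \alpha\,\id \;\le\; (\alpha+\delta^{-2})\,\id, $$
which is the matrix condition in (\ref{eq_959}). Restricting the convolution defining $\rho^{(1)}$ to any fixed bounded set of positive $\mu$-mass shows $\rho^{(1)}(x) \ge c\,e^{-|x|^2/\delta^2}$, so $-\log\rho^\nu$ grows at most quadratically. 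Its gradient grows at most linearly from the Hessian cap, and higher-order partial derivatives of $-\log\rho^{(1)}$ at $x$ are polynomial expressions in the central moments of $p_x$ divided by powers of $\delta$, which are uniformly bounded in $x$ since $p_x$ is $\delta^{-2}$-uniformly log-concave. Hence all partial derivatives of $-\log\rho^\nu$ grow polynomially, and $\nu$ is regular. Moreover, if $\mu$ is $t$-uniformly log-concave then $\nu$ is $\bigl(\alpha + \tfrac{t}{1+t\delta^2}\bigr)$-uniformly log-concave, which exceeds $t - \eps$ for $\delta$ small enough.

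\textbf{Step 3: Approximation of covariance and Poincar\'e constant.} As $\alpha, \delta \downarrow 0$, $\nu$ converges to $\mu$ weakly with uniformly exponentially decaying tails (inherited from log-concavity of $\mu$), so $\int |x|^2\,d\nu \to \int |x|^2\,d\mu$ gives $\|Cov(\mu) - Cov(\nu)\|_{op} \to 0$, hence $< \eps$ for suitable parameters. For the Poincar\'e constant, I will approximate a near-optimal locally Lipschitz test function for $\mu$ by a bounded smooth $f$, via truncation and mollification, satisfying $Var_\mu(f) \ge (C_P(\mu) - \eps/2)\int|\nabla f|^2\,d\mu$. Since $f$, $f^2$, and $|\nabla f|^2$ are bounded continuous, weak convergence yields $Var_\nu(f)/\int|\nabla f|^2\,d\nu \to Var_\mu(f)/\int|\nabla f|^2\,d\mu$, whence $C_P(\nu) \ge Var_\nu(f)/\int|\nabla f|^2\,d\nu \ge C_P(\mu) - \eps$ for $\delta, \alpha$ small. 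The principal subtlety is this last step: one must pass from the locally Lipschitz optimiser in the definition of $C_P$ to a bounded smooth surrogate whose Poincar\'e ratio is preserved in the weak limit.
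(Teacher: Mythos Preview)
Your proposal is correct and follows essentially the same two-step construction as the paper (Gaussian convolution, then Gaussian multiplication, then dominated-convergence arguments for $C_P$ and the covariance). One small difference: you derive the preservation of $t$-uniform log-concavity under Gaussian convolution directly via the Hessian formula $\nabla^2(-\log\rho^{(1)}) = \delta^{-2}\id - \delta^{-4}\,Cov(p_x)$ together with Brascamp--Lieb, whereas the paper proves a separate identity (Lemma~\ref{lem_1721}) interchanging Gaussian convolution and Gaussian multiplication and reads off the same bound $t/(1+t\delta^2)$ from it; both routes are short and yield the same conclusion. For Step~3 you can streamline the ``principal subtlety'' by taking the near-optimal test function in $C_c^\infty(\RR^n)$ from the outset (the paper notes that $C_c^\infty$ is dense in $H^1(\mu)$), which makes $f$, $f^2$, and $|\nabla f|^2$ compactly supported and renders the weak-convergence step immediate.
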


The proof of this approximation lemma is deferred to the Appendix below.
Assume from now on that  $\mu$ is a probability measure on $\RR^n$ with a regular, log-concave probability density $\rho = e^{-\psi}$.
As in Klartag and Putterman \cite{KP}, we say that a function $f: \RR^n \rightarrow \RR$ has subexponential decay relative to $\rho$
if there exist $C, a > 0$ such that
\begin{equation} |f(x)| \leq \frac{C}{\sqrt{\rho(x)}} e^{-a|x|} \qquad \qquad \qquad (x \in \RR^n).
\label{eq_1030} \end{equation}
We say that a function $f: \RR^n \rightarrow \RR$ is $\mu$-{\it tempered} if it is smooth and if all of its partial derivatives
of all orders have subexponential decay relative to $\rho$.
A log-concave probability density decays exponentially at infinity. Therefore if a smooth function $f: \RR^n \rightarrow \RR$ grows at most polynomially at infinity,
as do all of its partial derivatives, then it is $\mu$-tempered. A $\mu$-tempered function is clearly in $L^2(\mu)$.
Write $\cF_{\mu}$ for the collection of all $\mu$-tempered functions on $\RR^n$.
The Laplace operator associated with $\mu$ is defined for $u \in \cF_{\mu}$ via
\begin{equation}  L u = L_{\mu} u = \Delta u - \nabla \psi \cdot \nabla u. \label{eq_1647} \end{equation}
For any $u,v \in \cF_{\mu}$ we have the integration by parts formula
\begin{equation} \int_{\RR^n} (L u) v d \mu = -\int_{\RR^n} \langle \nabla u, \nabla v \rangle d \mu
\label{eq_1047} \end{equation}
and the (integrated) Bochner formula
\begin{equation}
\int_{\RR^n} (L u)^2 d \mu = \int_{\RR^n} \| \nabla^2 u \|_{HS}^2 d \mu + \int_{\RR^n} \langle (\nabla^2 \psi) \nabla u, \nabla u \rangle d \mu,
\label{eq_1531}
\end{equation}
where $\| \nabla^2 u \|_{HS}$ is the Hilbert-Schmidt norm of the Hessian matrix $\nabla^2 u$.
Formulae (\ref{eq_1047}) and (\ref{eq_1531}) are proven by intergation by parts,
see e.g., Ledoux \cite[Section 2.3]{ledoux1}. The regularity of $\rho$ and the $\mu$-temperedness of $u,v$ are used in order to discard the boundary terms.
The integrated Bochner formula (\ref{eq_1531}) is related to the commutation relation $L(\nabla u) = \nabla L u + (\nabla^2 \psi) \nabla u$,
and is reminiscent of similar formulae in Riemannian geometry.

\vs
The operator $-L$ is essentially self-adjoint and positive semi-definite in $\cF_{\mu} \subseteq L^2(\mu)$ with a discrete spectrum (see e.g.,
\cite[Corollary 4.10.9]{BGL} and \cite[Proposition A.1]{KP}).

\begin{lemma} All eigenfunctions of $L_{\mu}$ are $\mu$-tempered.
	\label{lem_tempered}
\end{lemma}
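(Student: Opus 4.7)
The plan is to convert the eigenvalue problem for $L_{\mu}$ into a Schr\"odinger eigenvalue problem on $\RR^n$ with a confining potential, and then invoke classical exponential decay estimates. First, since $\psi$ is $C^\infty$-smooth by regularity of $\mu$, the eigenvalue equation $L_\mu u = -\lambda u$ reads $\Delta u = \nabla \psi \cdot \nabla u - \lambda u$ with smooth coefficients. Standard interior elliptic regularity, bootstrapped through local Sobolev or Schauder estimates, therefore gives $u \in C^\infty(\RR^n)$ for any $L^2(\mu)$-eigenfunction $u$.

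For the decay, I perform the ground-state conjugation $v := u \sqrt{\rho} = u e^{-\psi/2}$. A direct computation shows that $u \in L^2(\mu)$ solves $L_\mu u = -\lambda u$ if and only if $v \in L^2(\RR^n)$ solves the Schr\"odinger equation $-\Delta v + V v = \lambda v$, where
\[
V \; := \; \tfrac{1}{4}|\nabla \psi|^2 - \tfrac{1}{2} \Delta \psi.
\]
The regularity assumption on $\mu$ ensures that $V$ is smooth with at most polynomial growth. The strong convexity $\nabla^2 \psi \ge \eps\, \id$ implies that $\psi$ attains a unique minimum at some $x_0$ and that $|\nabla \psi(x)| \ge \eps|x - x_0|$, while $\nabla^2 \psi \le \eps^{-1} \id$ yields $\Delta \psi \le n/\eps$. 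Consequently
\[
V(x) \; \ge \; \tfrac{\eps^2}{4}|x - x_0|^2 - \tfrac{n}{2\eps} \; \longrightarrow \; +\infty \qquad (|x| \to \infty),
\]
so $V$ is a confining potential.

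The main step is then a standard Agmon-type estimate: for every eigenvalue $\lambda$ of $-\Delta + V$ on $L^2(\RR^n)$ and every $a > 0$, the associated eigenfunction $v$ satisfies $|v(x)| \le C_a e^{-a|x|}$ pointwise. This follows by choosing a smooth, compactly-Lipschitz truncation $\Phi$ of $a|x|$, multiplying the Schr\"odinger equation by $v e^{2\Phi}$, and integrating by parts to obtain the Agmon identity
\[
\int_{\RR^n} |\nabla(v e^{\Phi})|^2 \, dx \; + \; \int_{\RR^n} (V - \lambda - |\nabla \Phi|^2)\, v^2 e^{2\Phi} \, dx \; = \; 0.
\]
Since $V(x) - \lambda - a^2 \to +\infty$, this forces $v e^{\Phi} \in L^2(\RR^n)$ uniformly in the truncation, hence $\int v^2 e^{2a|x|} \, dx < \infty$; local elliptic regularity on unit balls then upgrades this to the claimed pointwise bound. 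Transferring back via $u = v e^{\psi/2}$ yields $|u(x)| \le C_a e^{-a|x|}/\sqrt{\rho(x)}$, which is the required subexponential decay of $u$ relative to $\rho$.

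Finally, to obtain the analogous decay for every partial derivative of $u$, I would apply local interior elliptic estimates to the Schr\"odinger equation on balls of unit radius: $\|v\|_{C^k(B(x,1/2))} \lesssim \|v\|_{L^2(B(x,1))} \cdot (1 + \|V\|_{C^{k-1}(B(x,1))})$, where the last factor is bounded by a polynomial in $|x|$ by regularity of $\mu$. This yields $|\partial^\alpha v(x)| \le C_{a,\alpha} e^{-a|x|}$ for every multi-index $\alpha$ and every $a > 0$. Unwinding $u = v e^{\psi/2}$ using the polynomial growth of the partial derivatives of $\psi$, one obtains $|\partial^\alpha u(x)| \le C'_{a',\alpha} e^{-a'|x|}/\sqrt{\rho(x)}$ for every $a' < a$, proving $u \in \cF_\mu$. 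The only real obstacle is the Agmon decay step, but it is classical for Schr\"odinger operators with potentials tending to infinity; everything else is elliptic bootstrap.
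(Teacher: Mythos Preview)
Your proof is correct and follows essentially the same route as the paper: the ground-state conjugation $v = u e^{-\psi/2}$ to the Schr\"odinger operator $-\Delta + V$ with $V = \tfrac14|\nabla\psi|^2 - \tfrac12\Delta\psi$, the confining-potential bound from regularity of $\mu$, Agmon-type exponential decay for $v$, local elliptic estimates on unit balls to upgrade to derivatives of $v$, and the Leibniz rule to transfer back to $u$. The only cosmetic differences are that the paper cites the literature for the Agmon step rather than sketching it, and phrases the unit-ball estimate via Green's representation for the Poisson equation $\Delta v = (V-\lambda)v$ instead of generic interior Schauder/Sobolev bounds.
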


The proof of Lemma \ref{lem_tempered} is discussed in the Appendix; as in \cite[Section 2]{KP} this proof is reduced to known results on exponential decay of eigenfunctions of Schr\"odinger
operators.
The minimal eigenvalue of $-L$ is $0$, which is a simple eigenvalue corresponding to a constant eigenfunction. We write
$$ \lambda = \lambda(\mu) > 0 $$
for the minimal non-zero eigenvalue of $-L$. The positive semi-definite operator $-L$ has no spectrum in the interval $(0, \lambda)$,
while $0$ is an eigenvalue of multiplicity one. It follows that for any $f \in \cF_\mu$ with $\int f d \mu = 0$,
\begin{equation}  \lambda \int_{\RR^n} f^2 d \mu \leq \int_{\RR^n} |\nabla f|^2 d \mu
\label{eq_1852} \end{equation}
with equality if and only if $f$ is a eigenfunction of $L$ corresponding to the first non-zero eigenvalue.
Inequality  (\ref{eq_1852}) is sometimes referred to as the Poincar\'e inequality for
the measure $\mu$. Indeed, the space of compactly-supported, smooth
functions is dense in the Sobolev space $H^1(\mu)$, see the Appendix of \cite{BK}, and hence
$$ \lambda(\mu) = 1 / C_P(\mu). $$
One way to  prove the log-concave Lichnerowicz inequality (\ref{eq_154_}) is to substitute the eigenfunction $f$
into the Bochner formula (\ref{eq_1531}) and obtain the inequality $\lambda^2 \geq t \lambda$
which is equivalent to (\ref{eq_154_}). The following is a quantitative, log-concave version of Corollary 1 from \cite{K_uncond}.

\begin{proposition} Let $f \in \cF_{\mu}$ be an eigenfunction of $-L$ corresponding to the eigenvalue $\lambda = \lambda(\mu)$
	and normalized so that $\| f \|_{L^2(\mu)} = 1$. Then,
	\begin{equation}  \left| \int_{\RR^n} \nabla f d \mu  \right|^2 \geq \frac{1}{\lambda} \int_{\RR^n} [(\nabla^2 \psi) \nabla f \cdot \nabla f] d \mu,
	\label{eq_1817}
	\end{equation}
	and
	\begin{equation}  \left| \int_{\RR^n} \nabla f  d \mu \right|^2 = \lambda^2 \left| \int_{\RR^n} f(x) x d \mu(x) \right|^2 \leq \lambda^2 \| Cov(\mu) \|_{op}.
	\label{eq_1818} \end{equation} \label{prop_1432}
\end{proposition}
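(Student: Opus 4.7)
My plan is to handle the two displays separately: (\ref{eq_1818}) is essentially bookkeeping via integration by parts and Cauchy-Schwarz, while (\ref{eq_1817}) is the real content and will come from combining the Poincar\'e inequality (\ref{eq_1852}) applied coordinate-wise to $\nabla f$ with the Bochner formula (\ref{eq_1531}) applied to $f$ itself.

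\textbf{For (\ref{eq_1818}).} First I would record that, since $\lambda \ne 0$ and the zero-eigenspace of $-L$ consists of constants, $f$ is $L^2(\mu)$-orthogonal to $1$, so $\int f\,d\mu = 0$. Letting $m = \int x\,d\mu \in \RR^n$, the regularity assumption (\ref{eq_959}) forces $\psi$ to grow at least quadratically, so $\rho$ has Gaussian-type decay; this quickly shows $v_i(x) := x_i - m_i \in \cF_\mu$, legitimising the integration by parts formula (\ref{eq_1047}). Applying that formula with $u = f$, $v = v_i$, using $Lf = -\lambda f$ and $\int f\,d\mu = 0$, would give
\[
\int_{\RR^n} \partial_i f\,d\mu \;=\; \lambda \int_{\RR^n} f(x)(x_i - m_i)\,d\mu(x) \;=\; \lambda \int_{\RR^n} f(x)\,x_i\,d\mu(x),
\]
which is the stated equality. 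For the inequality part I would set $w := \int f(x) x\,d\mu(x)$ and apply Cauchy-Schwarz to $|w|^2 = \int f(x) \langle w, x - m \rangle\,d\mu(x)$ (the centering again uses $\int f\,d\mu = 0$); together with $\|f\|_{L^2(\mu)} = 1$ this gives $|w|^4 \leq \langle Cov(\mu) w, w \rangle \leq \|Cov(\mu)\|_{op}\,|w|^2$, hence $|w|^2 \leq \|Cov(\mu)\|_{op}$.

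\textbf{For (\ref{eq_1817}).} The key step is to apply the Poincar\'e inequality to each partial derivative $\partial_i f$. For each $i$, the function $\partial_i f - \int \partial_i f\,d\mu$ lies in $\cF_\mu$ and has zero $\mu$-mean, so (\ref{eq_1852}) would yield
\[
\lambda \int_{\RR^n} (\partial_i f)^2\,d\mu \;-\; \lambda \Bigl(\int_{\RR^n} \partial_i f\,d\mu\Bigr)^2 \;\leq\; \int_{\RR^n} |\nabla \partial_i f|^2\,d\mu.
\]
Summing over $i$, the right-hand side becomes $\int \|\nabla^2 f\|_{HS}^2\,d\mu$, while the first term on the left equals $\lambda \int |\nabla f|^2\,d\mu = \lambda^2$ by (\ref{eq_1047}) and the eigenfunction equation. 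This leaves
\[
\lambda^2 \;-\; \lambda \Bigl|\int_{\RR^n} \nabla f\,d\mu\Bigr|^2 \;\leq\; \int_{\RR^n} \|\nabla^2 f\|_{HS}^2\,d\mu.
\]
Now the Bochner formula (\ref{eq_1531}) applied to $u = f$, together with $(Lf)^2 = \lambda^2 f^2$ and $\|f\|_{L^2(\mu)} = 1$, reads $\int \|\nabla^2 f\|_{HS}^2\,d\mu = \lambda^2 - \int \langle (\nabla^2 \psi)\nabla f, \nabla f \rangle\,d\mu$. Substituting this into the previous display and rearranging yields (\ref{eq_1817}).

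\textbf{Where the work is.} Essentially everything except the final combination is routine manipulation, and the regularity of $\mu$ makes the discarding of boundary terms painless. The one substantive move is the pairing of Bochner with the coordinate-wise Poincar\'e inequality in Part 2: Bochner expresses $\lambda^2$ as the sum of a Hilbert-Schmidt Hessian term and the curvature term $\int \langle (\nabla^2 \psi)\nabla f, \nabla f\rangle\,d\mu$, and the Poincar\'e step bounds the former piece by $\lambda^2 - \lambda |\int \nabla f\,d\mu|^2$; subtracting delivers the required lower bound on $|\int \nabla f\,d\mu|^2$ in terms of the curvature integral, which is the crucial inequality underlying the improved Lichnerowicz bound.
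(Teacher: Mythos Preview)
Your proposal is correct and follows essentially the same approach as the paper: both proofs derive (\ref{eq_1817}) by combining the Bochner identity $\int (Lf)^2 d\mu = \int \|\nabla^2 f\|_{HS}^2 d\mu + \int (\nabla^2\psi)\nabla f\cdot\nabla f\,d\mu$ with the Poincar\'e inequality applied to each $\partial_i f$, and both derive (\ref{eq_1818}) via integration by parts against a linear function followed by Cauchy--Schwarz. The only cosmetic differences are that the paper chains the Bochner and Poincar\'e steps into a single inequality string rather than separating them, and for (\ref{eq_1818}) it works with a unit direction $\theta\in S^{n-1}$ where you work with the vector $w=\int f(x)\,x\,d\mu(x)$ directly.
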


\begin{proof} By the Bochner formula and the Poincar\'e inequality for $\partial^i f \ (i=1,\ldots,n)$,
	\begin{align} \nonumber \lambda^2 & = \int_{\RR^n} (L f)^2 d \mu = \int_{\RR^n} [(\nabla^2 \psi) \nabla f \cdot \nabla f] d \mu + \int_{\RR^n} \| \nabla^2 f \|_{HS}^2 d \mu
	\\ \nonumber  & \geq  \int_{\RR^n} [(\nabla^2 \psi) \nabla f \cdot \nabla f] d \mu + \lambda \left[ \int_{\RR^n} |\nabla f|^2 d \mu - \left| \int_{\RR^n} \nabla f d \mu \right|^2 \right]
	\\ & =  \int_{\RR^n} [(\nabla^2 \psi) \nabla f \cdot \nabla f] d \mu  + \lambda^2 - \lambda \left| \int_{\RR^n} \nabla f d \mu \right|^2.
	\label{eq_1731}
	\end{align}
	This implies (\ref{eq_1817}). As for (\ref{eq_1818}), note that for any $\theta \in S^{n-1}$, denoting $g(x) = \langle x, \theta \rangle$,
	\begin{equation}  \int_{\RR^n} \langle \nabla f, \theta \rangle  d \mu = \int_{\RR^n} \langle \nabla f(x), \nabla g(x) \rangle d \mu(x) = -\int_{\RR^n} (L f) g d \mu = \lambda \int_{\RR^n} f(x) \langle x, \theta \rangle d\mu(x).
	\label{eq_1458} \end{equation}
	Denote $E = \int_{\RR^n} \langle x, \theta \rangle d \mu(x)$. We use the facts that $\int f d \mu = 0, \int f^2 d \mu = 1$ and  the Cauchy-Schwartz inequality, and obtain
	$$ \left| \int_{\RR^n} \langle \nabla f, \theta \rangle  d \mu \right|^2 = \lambda^2 \left| \int_{\RR^n} f(x) (\langle x, \theta \rangle - E)  d\mu(x) \right|^2
	\leq \lambda^2 \int_{\RR^n} (\langle x, \theta \rangle - E)^2 d \mu(x). $$
	This implies (\ref{eq_1818}), since the last integral equals $\langle Cov(\mu) \theta, \theta \rangle$.
\end{proof}

We intuitively think of a function $f \in \cF_{\mu}$ as having a ``preferred direction'' when
$$ \left| \int_{\RR^n} \nabla f d \mu \right|^2 \gtrsim   \int_{\RR^n} \left| \nabla f \right|^2 d \mu. $$
The property that the eigenfunction $f$ from Proposition \ref{prop_1432} has a preferred direction
is  in the spirit of the ``hot spots conjecture'' \cite{JM}, as we learned
from David Jerison. The KLS conjecture suggests that
$$ \lambda(\mu) \cdot \| Cov(\mu) \|_{op} \geq c $$
for a universal constant $c > 0$. Since $\int |\nabla f|^2 d \mu = \lambda$ for the eigenfunction
$f$ from Proposition \ref{prop_1432}, we see from (\ref{eq_1818})
that if the eigenfunction has a preferred
direction, then the conclusion of the KLS conjecture holds.

\begin{proof}[Proof of Theorem \ref{prop_1944}]
	We may assume that $\mu$ is not supported in an affine  subspace $E \subsetneq \RR^n$
	(otherwise, we work in this subspace). Since $\mu$ is log-concave,
	this means that the measure $\mu$ is necessarily absolutely-continuous in $\RR^n$.
	An approximation argument based on Lemma \ref{lem_approx} shows that it suffices to prove the theorem for an absolutely-continuous,
	regular, log-concave probability measure $\mu$ on $\RR^n$.
	
	\vs
	Let $f$ be an eigenfunction of the operator $-L_{\mu}$ corresponding to the eigenvalue $\lambda = \lambda(\mu)$
	and normalized so that $\| f \|_{L^2(\mu)} = 1$. By combining (\ref{eq_1817}) and (\ref{eq_1818}) we obtain
	\begin{equation}  \int_{\RR^n} \left[ (\nabla^2 \psi) \nabla f \cdot \nabla f \right] d \mu \leq \lambda^3 \cdot \| Cov(\mu) \|_{op}.
	\label{eq_1935} \end{equation}
	Since $\mu$ is $t$-uniformly log-concave, we know that $\nabla^2 \psi(x) \geq t$
	for all $x \in \RR^n$. Hence  the left-hand side of (\ref{eq_1935}) is at least
	\begin{equation}  t \int_{\RR^n} |\nabla f|^2 d \mu = t \lambda. \label{eq_1502} \end{equation}
	Hence  (\ref{eq_1935}) implies that $\lambda^2 \| Cov(\mu) \|_{op} \geq t$. This proves the left-hand side inequality in (\ref{eq_1518}).
	For the right-hand side inequality, for any $\theta  \in S^{n-1}$ we set $g_{\theta}(x) = \langle x - b_{\mu}, \theta \rangle$
	where
	$$ b_{\mu} = \int_{\RR^n} x d \mu(x) \in \RR^n $$ is the barycenter of $\mu$.
	We now  use the Poincar\'e inequality
	(\ref{eq_1852}) as follows:
	$$ \| Cov(\mu) \|_{op} = \sup_{\theta \in S^{n-1}} Cov(\mu) \theta \cdot \theta = \sup_{\theta \in S^{n-1}} \int_{\RR^n} g_{\theta}^2 d \mu \leq \frac{1}{\lambda} \sup_{\theta \in S^{n-1}} \int_{\RR^n} |\nabla g_{\theta}|^2 d \mu = \frac{1}{\lambda} \leq t, $$
	where we used the log-concave Lichnerowicz inequality (\ref{eq_154_}) in the last passage.
\end{proof}

\section{Logarithmic bound for the KLS constant}
\label{sec3}

We use the notation from \cite[Section 4]{KP}.
Let $\mu$ be a probability measure on $\RR^n$ with a regular, log-concave probability density $\rho = e^{-\psi}$.
For $t \geq 0$ and $\theta \in \RR^n$ we denote
\begin{equation}  p_{t, \theta}(x) = \frac{1}{Z(t,\theta)} e^{\langle \theta, x \rangle - t |x|^2/2} \rho(x) \qquad \qquad \qquad (x \in \RR^n)
\label{eq_1015} \end{equation}
where $Z(t, \theta) = \int_{\RR^n} e^{\langle \theta, x \rangle - t |x|^2/2} \rho(x) dx$. The barycenter and
covariance matrix of the probability density $p_{t, \theta}$ are denoted by
$$ a(t,\theta) = \int_{\RR^n} x p_{t, \theta}(x) \, dx \in \RR^n$$
and
$$ A(t,\theta) = \int_{\RR^n} (x \otimes x) p_{t, \theta}(x) \, dx \, - \, a(t, \theta) \otimes a(t, \theta) \in \RR^{n \times n}, $$
where $x \otimes x = (x_i x_j)_{i,j=1,\ldots,n} \in \RR^{n \times n}$.
Write $\mu_{t, \theta}$ for the probability measure whose density is the regular, log-concave probability density $p_{t, \theta}$.
We abbreviate $$ \lambda(t, \theta) = \lambda(\mu_{t, \theta}), $$
the first non-zero eigenvalue of the operator $-L_{\mu_{t, \theta}}$. Since $\rho$ is log-concave, the probability measure $\mu_{t, \theta}$ is $t$-uniformly log-concave,
as we see from formula (\ref{eq_1015}). We may therefore apply the improved log-concave Lichnerowicz inequality, which is
Theorem \ref{prop_1944} above, and obtain the bound
\begin{equation}
\lambda(t, \theta)  \geq \sqrt{ \frac{t}{\| A(t, \theta) \|_{op}} } \geq t.
\label{eq_1017}
\end{equation}
Given $f \in L^1(\mu)$ we write
$$ M_f(t, \theta) = \int_{\RR^n} f(x) p_{t, \theta}(x) dx.
$$
This is a smooth function of $\theta \in \RR^n$, and by differentiating under the integral sign
we obtain
\begin{align}  \nonumber \nabla_{\theta} M_f(t, \theta) & = \int_{\RR^n} (x - a(t, \theta)) f(x) p_{t, \theta}(x) dx
\\ & = \int_{\RR^n} (x - a(t, \theta)) (f(x) - M_f(t, \theta)) p_{t, \theta}(x) dx.
\label{eq_1028}
\end{align}
The ``tilt process'' is the stochastic process $(\theta_t)_{t \geq 0}$ attaining values in $\RR^n$ and defined via the stochastic differential equation
\begin{equation}  \theta_0 = 0 , \quad d \theta_t = d W_t + a(t, \theta_t) dt, \label{eq_910} \end{equation}
where $(W_t)_{t \geq 0}$ is a standard Brownian motion in $\RR^n$ with $W_0 = 0$. The existence and uniqueness of a strong solution
to (\ref{eq_910}) are proven via a standard argument (see, e.g. Chen \cite{chen}).
As explained in \cite[Section 4]{KP}, the process $(\theta_t)_{t \geq 0}$ coincides
in law with the process $(t X + W_t)_{t \geq 0}$
where $X$ is a random vector with law $\mu$, independent of the Brownian motion $(W_t)_{t \geq 0}$.
Setting $p_t(x) = p_{t, \theta_t}(x)$ and $a_t = a(t, \theta_t)$ we obtain from the It\^o formula that
$$ d p_t(x) = p_t(x) \langle x - a_t, d W_t \rangle \qquad \qquad \qquad (x \in \RR^n)
$$
with $p_0(x) = \rho(x)$. This stochastic process is central  in the theory of non-linear filtering,
see e.g. Chiganski \cite[Chapter 6]{chiganski} and references therein for its analysis and history.
Its usefulness for proving isoperimetric inequalities and functional inequalities,  for bounding mixing times, and for analyzing probability measures with convexity properties
was realized by Eldan \cite{Eldan1},
Lee and Vempala \cite{LV}, Chen \cite{chen} and others.
The process $(p_t(x))_{t \geq 0}$ is a martingale with respect to the filtration induced by the Brownian motion,
and in particular
\begin{equation} \EE p_t(x) = p_0(x) = \rho(x) \qquad \qquad (t \geq 0, x \in \RR^n). \label{eq_1048}
\end{equation}
Abbreviate $\lambda_t = \lambda(t, \theta_t), A_t = A(t, \theta_t)$ and note that $\lambda_0 = \lambda(\mu)$.

\begin{lemma} For any $t > 0$ and $f \in L^2(\mu)$,
	$$ \EE Var_{p_t}(f) \leq  Var_{p_0}(f) \leq \left( 2 + \frac{t}{\lambda_0} \right)  \EE Var_{p_t}(f) $$
	where we write $Var_{p_t}(f) = \int_{\RR^n} f^2 p_t - (\int_{\RR^n} f p_t)^2$.
	\label{lem_1131}
\end{lemma}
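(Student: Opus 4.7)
The plan is to treat the two inequalities separately. The left inequality follows at once from the martingale structure of $(p_t)$; the right is the substantive statement, and will rely on the joint representation $(\theta_t)\stackrel{d}{=}(tX+W_t)$ together with two separate Poincar\'e inequalities --- one for $\mu$ and one for the terminal Gaussian.

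For the left inequality, $dp_t(x)=p_t(x)\langle x-a_t,dW_t\rangle$ shows that both $(M_f(t,\theta_t))_{t\ge 0}$ and $(M_{f^2}(t,\theta_t))_{t\ge 0}$ are martingales in the Brownian filtration. Hence $\EE M_f(t,\theta_t)=\int f\,d\mu$ and $\EE M_{f^2}(t,\theta_t)=\int f^2\,d\mu$, and the law of total variance
\[
\var_{p_0}(f) \;=\; \EE\var_{p_t}(f) + \var(M_f(t,\theta_t))
\]
immediately gives $\EE\var_{p_t}(f)\leq \var_{p_0}(f)$.

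For the right inequality, write $M_f(t,\theta_t)=g(X,W_t)$, where $g(x,w):=M_f(t,tx+w)$ and, by the joint representation recalled above, $X\sim\mu$ is independent of $W_t\sim N(0,tI_n)$. Conditioning on $X$ and using the law of total variance once more,
\[
\var(M_f(t,\theta_t)) \;=\; \EE\bigl[\var(g\mid X)\bigr] + \var\bigl(\EE[g\mid X]\bigr).
\]
The first summand, by the Gaussian Poincar\'e inequality for $N(0,tI_n)$ (Poincar\'e constant $t$) applied to $w\mapsto g(x,w)$ together with $\nabla_w g(x,w)=\nabla_\theta M_f(t,tx+w)$, is bounded by $t\cdot\EE|\nabla_\theta M_f(t,\theta_t)|^2$. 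For the second, set $h(x):=\EE_{W_t} g(x,W_t)$; since $\nabla_x h(x)=t\,\EE_{W_t}\nabla_\theta M_f(t,tx+W_t)$, Jensen's inequality followed by Poincar\'e for $\mu$ (constant $1/\lambda_0$) yields a bound of $(t^2/\lambda_0)\,\EE|\nabla_\theta M_f(t,\theta_t)|^2$. Summing,
\[
\var(M_f(t,\theta_t)) \;\leq\; t\Big(1+\tfrac{t}{\lambda_0}\Big)\,\EE|\nabla_\theta M_f(t,\theta_t)|^2.
\]

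To close, apply Cauchy-Schwarz via (\ref{eq_1028}) to obtain $|\nabla_\theta M_f(t,\theta_t)|^2\leq \|A_t\|_{op}\var_{p_t}(f)$, and invoke the log-concave Lichnerowicz inequality (the weak half of Theorem \ref{prop_1944}) on the $t$-uniformly log-concave measure $\mu_{t,\theta_t}$ to get $\|A_t\|_{op}\leq 1/t$. This yields $\EE|\nabla_\theta M_f(t,\theta_t)|^2\leq \EE\var_{p_t}(f)/t$, and substituting gives $\var(M_f(t,\theta_t))\leq (1+t/\lambda_0)\EE\var_{p_t}(f)$; combined with the variance decomposition in the left part this is the right inequality. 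The conceptual crux --- and the step I expect to be the main obstacle for anyone trying a direct proof --- is the joint representation: it substitutes two clean ``global'' Poincar\'e inequalities for any attempt at a pointwise Gr\"onwall estimate on $s\mapsto \EE\var_{p_s}(f)$, which would require controlling $\|A_s\|_{op}$ uniformly near $s=0$ (where the Lichnerowicz bound $1/s$ blows up).
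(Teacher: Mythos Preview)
Your proof is correct and follows essentially the same route as the paper. The only cosmetic difference is that where you split $\var(M_f(t,\theta_t))$ by conditioning on $X$ and apply the Gaussian and $\mu$-Poincar\'e inequalities separately, the paper packages these two steps into a single appeal to the subadditivity of the Poincar\'e constant under independent sums (Courtade \cite{cour}), $C_P(tX+W_t)\le t^2/\lambda_0+t$, which is exactly what your two-step conditioning argument establishes.
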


\begin{proof} Set $M_t = M_f(t, \theta_t) = \int f p_t$.
	It follows from (\ref{eq_1048}) and Fubini's theorem that
	\begin{equation}  Var_{p_0}(f) = \int_{\RR^n} (f - M_0)^2 p_0 = \EE \int_{\RR^n} (f - M_0)^2 p_t = \EE \int_{\RR^n} (f - M_t)^2 p_t + \EE (M_t-M_0)^2.
	\label{eq_1055} \end{equation}
	The first summand in (\ref{eq_1055}) equals $\EE Var_{p_t}(f)$, which is evidently at most $Var_{p_0}(f)$.
	As for the second summand in (\ref{eq_1055}), it equals
	$$ Var(M_t) = Var( M_f(t, \theta_t) ), $$
	where the random vector $\theta_t$ has the law of $t X + W_t$,
	where $X$ is distributed according to $\mu$, and where $W_t$ is a Gaussian random vector in $\RR^n$ of mean
	zero and covariance matrix $t \cdot \id$ that is independent of $X$. By the subadditivity property of the Poincar\'e constant (see e.g. Courtade \cite{cour}),
	\begin{equation}  C_P(\theta_t) = C_p(t X + W_t) \leq C_p(t X) + C_p(W_t) = t^2/ \lambda_0 + t = \alpha t
	\label{eq_1754} \end{equation}
	for $\alpha = 1 + t / \lambda_0$. According to (\ref{eq_1028}) and to the Cauchy-Schwartz inequality, for any $\theta \in \RR^n$,
	\begin{align*}
	|\nabla_{\theta} M_f(t, \theta)| & = \sup_{\eta \in S^{n-1}} \int_{\RR^n} \langle x - a(t, \theta), \eta \rangle (f(x) - M_f(t, \theta)) p_{t, \theta}(x) dx
	\\ & \leq \sqrt{ \| A(t, \theta) \|_{op} \cdot Var_{p_{t, \theta}}(f)  }.
	\end{align*}
	Hence, by (\ref{eq_1754}) and the Poincar\'e inequality for the random vector $\theta_t$,
	\begin{equation}  Var( M_f(t, \theta_t) ) \leq \alpha t \cdot \EE |\nabla_{\theta} M_f(t, \theta_t)|^2
	\leq \alpha t \cdot \EE \left[ \| A_t \|_{op}  Var_{p_t}(f) \right] \leq \alpha  \cdot \EE Var_{p_t}(f), \label{eq_1110} \end{equation}
	where we used the second inequality in (\ref{eq_1017}) in the last passage, which is equivalent to the bound $\| A_t \|_{op} \leq 1/t$. Thus, by (\ref{eq_1055}) and (\ref{eq_1110}),
	$$    Var_{p_0}(f) \leq \EE Var_{p_t}(f) + \alpha \cdot \EE Var_{p_t}(f) = (1 + \alpha) \EE Var_{p_t}(f).
	$$
\end{proof}

\begin{corollary} For any $A > 0$ and  $0 < t < A \lambda_0$,
	\begin{equation}  \lambda_0^{-1} \leq C (A+1) \EE \lambda_t^{-1} \leq C (A+1)  \frac{\EE \sqrt{\| A_t \|_{op}}}{\sqrt t},
	\label{eq_1828} \end{equation}
	where $C > 0$ is a universal constant. \label{cor_1804}
\end{corollary}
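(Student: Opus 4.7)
The plan is to derive the two inequalities in~(\ref{eq_1828}) separately by combining Lemma~\ref{lem_1131} with Theorem~\ref{prop_1944}.

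The right-hand inequality follows immediately from Theorem~\ref{prop_1944}. Since $\mu_{t,\theta_t}$ is $t$-uniformly log-concave by the form~(\ref{eq_1015}), the pointwise estimate $\lambda_t^{-1} \le \sqrt{\|A_t\|_{op}/t}$ holds almost surely, and taking expectations yields $\EE\lambda_t^{-1} \le \EE\sqrt{\|A_t\|_{op}}/\sqrt{t}$. This is in fact stronger than the claimed bound; the prefactor $C(A+1)$ is needed only for the left-hand inequality.

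For the left-hand inequality, I would take $f$ to be the first non-trivial eigenfunction of $-L_\mu$, normalized so that $\int f\,d\mu = 0$ and $\|f\|_{L^2(\mu)}=1$. Then $Var_{p_0}(f) = 1$ and $\int |\nabla f|^2\,d\mu = \lambda_0$. Lemma~\ref{lem_1131} immediately yields
$$
1 \;=\; Var_{p_0}(f) \;\le\; (2 + t/\lambda_0)\,\EE\,Var_{p_t}(f),
$$
so the assumption $t < A\lambda_0$ gives $\EE\,Var_{p_t}(f) \ge 1/(A+2)$. On the other hand, applying the Poincar\'e inequality for $\mu_{t,\theta_t}$ pointwise gives $Var_{p_t}(f) \le \lambda_t^{-1}\int|\nabla f|^2 p_t$, and the martingale identity~(\ref{eq_1048}) gives $\EE \int|\nabla f|^2 p_t = \lambda_0$. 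The plan is then to prove $\EE[\lambda_t^{-1}\int|\nabla f|^2 p_t] \le C\lambda_0\,\EE\lambda_t^{-1}$, which combined with the lower bound on $\EE\,Var_{p_t}(f)$ immediately yields $\lambda_0^{-1} \le C(A+2)\,\EE\lambda_t^{-1}$.

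The main obstacle is precisely this cross-term estimate: the random variable $\lambda_t^{-1}$ and the positive martingale $N_t := \int |\nabla f|^2 p_t$ are correlated, and a naive Cauchy-Schwartz bound would replace $\EE\lambda_t^{-1}$ by the larger quantity $(\EE\lambda_t^{-2})^{1/2}$. My plan to circumvent this is a truncation on the size of $N_t$: on the event $\{N_t \le M\lambda_0\}$ one uses the pointwise bound $N_t \le M\lambda_0$ to obtain a contribution at most $M\lambda_0\,\EE\lambda_t^{-1}$, while on the complement one invokes the crude bound $\lambda_t^{-1} \le 1/t$ from the standard log-concave Lichnerowicz inequality and controls the tail via Markov's inequality applied to $N_t$ (which has mean $\lambda_0$), together with an It\^o variance computation for the martingale $(N_s)_{0 \le s \le t}$ if a sharper tail estimate is needed. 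Balancing $M$ against the constraint $t < A\lambda_0$ should absorb all remaining factors into the stated prefactor $C(A+1)$.
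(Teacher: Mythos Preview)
Your treatment of the right-hand inequality is correct and matches the paper: it is precisely the bound~(\ref{eq_1017}).

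For the left-hand inequality, however, your truncation plan has a genuine gap. With the trivial tail bound $\EE[N_t\,\mathbf{1}_{\{N_t>M\lambda_0\}}]\le\EE N_t=\lambda_0$, the contribution of the bad event is at most $\lambda_0/t$, and since $t<A\lambda_0$ this quantity exceeds $1/A>1/(A+2)$. Thus the inequality $1/(A+2)\le M\lambda_0\,\EE\lambda_t^{-1}+\lambda_0/t$ is vacuous for every $M$. To salvage the argument you would need a nontrivial bound on $\EE[N_t\,\mathbf{1}_{\{N_t>M\lambda_0\}}]$, and an It\^o variance computation only gives $\EE N_t^2\le\lambda_0^2+\int_0^t\EE\bigl[\|A_s\|_{op}\,Var_{p_s}(|\nabla f|^2)\bigr]\,ds$, which involves $\int|\nabla f|^4\,d\mu$ (or second derivatives of $f$). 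For the eigenfunction there is no universal control on these quantities in terms of $\lambda_0$, so the balancing step cannot be closed.

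The paper's proof sidesteps the decorrelation problem entirely by choosing $f$ differently. Instead of the eigenfunction, it invokes E.~Milman's theorem (\ref{eq_1045}) to produce a \emph{$1$-Lipschitz} function $f$ with $Var_\mu(f)\ge c\lambda_0^{-1}$. Then $\int|\nabla f|^2 p_t\le 1$ holds deterministically, so $\EE\bigl[\lambda_t^{-1}\int|\nabla f|^2 p_t\bigr]\le\EE\lambda_t^{-1}$ immediately, and the left-hand inequality follows in one line. The missing ingredient in your argument is precisely this use of E.~Milman's theorem to obtain a pointwise (rather than $L^2$) gradient bound.
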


\begin{proof} E. Milman's theorem \cite{mil} states that
	\begin{equation} c \lambda_0^{-1} \leq \sup_{\vphi \textrm{ is 1-Lipschitz}} Var_{\mu}(\vphi)
	\label{eq_1045} \end{equation}
	where the supremum runs over all $1$-Lipschitz functions $\vphi: \RR^n \rightarrow \RR$,
	and where $c > 0$ is a universal constant. See \cite{K_needle} for a proof of this theorem that uses {\it needle decompositions} rather
	than the regularity theory of the isoperimetric problem.
	Let $f: \RR^n \rightarrow \RR$ be a $1$-Lipschitz function with
	\begin{equation} Var_{\mu}(f)  \geq c \lambda_0^{-1} / 2.
	\label{eq_1047_} \end{equation}
	By Lemma \ref{lem_1131} and the Poincar\'e inequality,
	\begin{equation}
	Var_{\mu}(f) = Var_{p_0}(f) \leq (2 + A) \EE Var_{p_t}(f) \leq (2 + A)  \EE \lambda_t^{-1} \int_{\RR^n} |\nabla f|^2 p_t \leq (2 + A) \EE \lambda_t^{-1}. \label{eq_1056}
	\end{equation}
	The left-hand side inequality in  (\ref{eq_1828}) follows from (\ref{eq_1047_}) and (\ref{eq_1056}).
	The right-hand side inequality in  (\ref{eq_1828})
	follows from (\ref{eq_1017}).
\end{proof}

Consider the covariance process $A_t = A(t, \theta_t)$ defined for $t \geq 0$. Since $\mu$ is isotopic we have $A_t = \id$ for $t = 0$.
One of the features of the covariance process is the fact that for $0 < t \leq c \psi_n^{-2} / \log n$,
\begin{equation}
\EE \| A_t \|_{op} \leq C.
\label{eq_1732}
\end{equation}
A proof of inequality (\ref{eq_1732}) appears in \cite{k_chen}. We remark that Corollary 5.4 in Klartag and Lehec \cite{KL} states that (\ref{eq_1732}) holds true
whenever $t \leq c \kappa_n^{-2} / \log n$, where
$$ \kappa_n = \sup_X \EE \| X_1 (X \otimes X) \|_{HS}^2 $$
and the supremum runs over all isotropic, log-concave random vectors $X = (X_1,\ldots,X_n) \in \RR^n$.
The quantity $\kappa_n$ was used by Eldan \cite{Eldan1}, and it satisfies
\begin{equation}
\kappa_n^2  \leq 4 \sup_{X} C_P(X) \leq C \psi_n^2
\label{eq_1737}
\end{equation}
for a universal constant $C >0$, where the supremum runs over all isotropic, log-concave random vectors $X \in \RR^n$.
Indeed, the right-hand side inequality in (\ref{eq_1737}) follows from (\ref{eq_1103}) and (\ref{eq_1104}) above.
In order to prove the left-hand side inequality in (\ref{eq_1737}) we argue as follows:
Let $X$ be an isotropic, log-concave
random vector, and denote $B = \EE X_1 X \otimes X$ and $f(x) = \langle B x, x \rangle$. Then,
\begin{align*} \| B \|_{HS}^2 & = \EE X_1 \langle BX, X \rangle \leq \sqrt{ \EE X_1^2} \cdot \sqrt{ Var( f(X) ) }
\leq \sqrt{C_P(X) \cdot \EE |\nabla f(X)|^2 } \\ & = 2  \sqrt{ C_P(X) \cdot \EE |BX|^2} = 2 \sqrt{ C_P(X) }  \| B \|_{HS}.
\end{align*}
This implies the left-hand side inequality in (\ref{eq_1737}).

\begin{proof}[Proof of Theorem \ref{thm2}]
	Let $\mu$ be an isotropic, log-concave probability measure in $\RR^n$ with
	\begin{equation}  \psi_{\mu} \geq \frac{ \psi_n}{2}. \label{eq_1814} \end{equation}
	As we discussed above, for $ t = c \psi_n^{-2} / \log n$,
	\begin{equation}  \EE \sqrt{ \| A_t \|_{op} } \leq \sqrt{ \EE \| A_t \|_{op} } \leq C.
	\label{eq_1831} \end{equation}
	Since $\lambda_0 = 1 / C_P(\mu)$, from (\ref{eq_1103}) and (\ref{eq_1104}) we know that
	\begin{equation}  t \leq c' \psi_n^{-2} \leq \tilde{c} \psi_{\mu}^{-2} \leq \bar{c} \lambda_{0}. \label{eq_1859} \end{equation}
	We now apply Corollary \ref{cor_1804}, together with (\ref{eq_1831}) and (\ref{eq_1859}), and obtain
	\begin{equation}  \lambda_0^{-1} \leq \frac{C}{\sqrt{t}} \leq \tilde{C} \psi_n \sqrt{\log n}.  \label{eq_1122} \end{equation}
	Consequently, by using (\ref{eq_1103}), (\ref{eq_1814}) and (\ref{eq_1122}),
	$$  \psi_n^2 \leq 4 \psi_{\mu}^2 \leq \tilde{C} \lambda_0^{-1} \leq \bar{C} \psi_n \sqrt{\log n}. $$
	This implies that $\psi_n \leq C \sqrt{\log n}$.
\end{proof}

\section{Remarks}

One point in the proof above which seems counter-intuitive
is the fact that if the eigenfunction $f$ satisfies
\begin{equation}  \left| \int_{\RR^n} f(x) x d \mu(x) \right|^2 = o \left( \| Cov(\mu) \|_{op} \right),  \label{eq_1432} \end{equation}
then our estimates become {\it better}. Here $\mu$ is an isotropic, regular, log-concave
probability measure in $\RR^n$, and $f \in L^2(\mu)$ is an eigenfunction satisfying $Lf = -\lambda(\mu) f$ and $\| f \|_{L^2(\mu)} = 1$.
Intuitively, strong correlation between the eigenfunction and a linear function should
improve estimates towards the KLS conjecture, rather than the other way around. One could speculate that analysis of
the $H^{-1}$-norm of linear functionals, which played a crucial role in Klartag and Lehec \cite{KL}, could help us understand
this point. The $H^{-1}(\mu)$-norm
may be defined, for  $f \in L^2(\mu)$ with $\int_{\RR^n} f d \mu = 0$, via
$$ \| f \|_{H^{-1}(\mu)} := \sup \left \{ \int_{\RR^n} f g d \mu \, ; \, g \in L^2(\mu) \textrm{ is locally Lipschitz with } \int_{\RR^n} |\nabla g|^2 d \mu \leq 1 \right \}.  $$
See \cite{BK, K_uncond} for basic properties of the $H^{-1}(\mu)$-norm and for the inequality $$ \lambda(\mu) \cdot \| f \|_{H^{-1}(\mu)}^2 \leq \| f \|_{L^2(\mu)}^2. $$
Here is a certain version of the improved log-concave Lichnerwicz inequality:

\begin{proposition} Let $t > 0$ and assume that $\mu$ is a probability measure in $\RR^n$ that is regular and $t$-uniformly log-concave.
	Assume furthermore that $\mu$ is centered, i.e., $b_{\mu} = \int_{\RR^n} x d \mu(x) = 0$. Denote
	$$ R = \sup_{\theta \in S^{n-1}} \left \| \langle \cdot, \theta \rangle \right \|^2_{H^{-1}(\mu)}. $$
	Then $ \lambda(\mu) \geq (t / R)^{1/3}$.
\end{proposition}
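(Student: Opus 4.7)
The plan is to mimic the proof of Theorem~\ref{prop_1944}, replacing the use of $\|Cov(\mu)\|_{op}$ with the quantity $R$ coming from the $H^{-1}(\mu)$-norm. Let $f \in \cF_\mu$ be the eigenfunction of $-L_\mu$ corresponding to $\lambda = \lambda(\mu)$, normalized by $\|f\|_{L^2(\mu)}=1$ (which exists and is $\mu$-tempered by Lemma~\ref{lem_tempered}, using the assumed regularity of $\mu$). Then $\int f\,d\mu = 0$ and, by (\ref{eq_1047}), $\int |\nabla f|^2 d\mu = \lambda$. Since $\mu$ is $t$-uniformly log-concave we have $\nabla^2 \psi \geq t \cdot \id$, so combining (\ref{eq_1817}) with the identity (\ref{eq_1818}) from Proposition~\ref{prop_1432} yields the chain
$$
\lambda^2 \Bigl| \int_{\RR^n} f(x) x\, d\mu(x) \Bigr|^2 = \Bigl|\int_{\RR^n} \nabla f\, d\mu\Bigr|^2 \geq \frac{1}{\lambda}\int_{\RR^n} [(\nabla^2 \psi)\nabla f \cdot \nabla f]\, d\mu \geq \frac{t}{\lambda} \int_{\RR^n} |\nabla f|^2 d\mu = t.
$$
Hence the lower bound $\bigl|\int f(x) x\, d\mu\bigr|^2 \geq t/\lambda^2$.

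For the matching upper bound the idea is to use duality in $H^{-1}(\mu)$. Set $g := f/\sqrt{\lambda}$; this function is smooth (hence locally Lipschitz) and satisfies $\int |\nabla g|^2\, d\mu = 1$, so $g$ is admissible in the supremum defining the $H^{-1}(\mu)$-norm. Since $\mu$ is centered, for each $\theta \in S^{n-1}$ the linear function $h_\theta(x) := \langle x, \theta\rangle$ lies in $L^2(\mu)$ with $\int h_\theta\, d\mu = 0$, so
$$
\Bigl| \int_{\RR^n} f(x) \langle x, \theta\rangle\, d\mu(x) \Bigr|^2 = \lambda \Bigl| \int_{\RR^n} g\, h_\theta\, d\mu \Bigr|^2 \leq \lambda\, \|h_\theta\|^2_{H^{-1}(\mu)} \leq \lambda R.
$$
Choosing $\theta$ to be a unit vector in the direction of $\int f(x) x\, d\mu$ gives the upper bound $\bigl|\int f(x) x\, d\mu\bigr|^2 \leq \lambda R$. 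Combined with the lower bound from the previous paragraph, $t/\lambda^2 \leq \lambda R$, which rearranges to $\lambda \geq (t/R)^{1/3}$.

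The argument is structurally identical to the proof of Theorem~\ref{prop_1944}; the only new ingredient is that the Cauchy--Schwarz upper bound on $\bigl|\int f(x) x\, d\mu\bigr|^2$ in terms of $\|Cov(\mu)\|_{op}$ (which was used there) is replaced by the sharper dual bound $\lambda R$. Consequently there is no genuine obstacle: regularity is built into the hypothesis so Proposition~\ref{prop_1432} and the Bochner formula apply directly, and centering is precisely what makes $h_\theta$ an element of $H^{-1}(\mu)$. The only conceptual step worth flagging is the recognition that the eigenfunction $f$, rescaled by $1/\sqrt{\lambda}$, is exactly the right test function to insert into the $H^{-1}(\mu)$-duality, and this is forced by the normalization $\int |\nabla f|^2 d\mu = \lambda$.
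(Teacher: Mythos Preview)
Your proof is correct and follows essentially the same approach as the paper's. Both arguments combine inequalities (\ref{eq_1817}) and (\ref{eq_1818}) from Proposition~\ref{prop_1432} with the $t$-uniform log-concavity bound (\ref{eq_1502}), and both use the rescaled eigenfunction $f/\sqrt{\lambda}$ as the test function in the $H^{-1}(\mu)$ duality; the only difference is that the paper writes the chain $\int[(\nabla^2\psi)\nabla f\cdot\nabla f]\,d\mu \leq \lambda^4 R$ in one line rather than separating it into upper and lower bounds on $\bigl|\int f(x)\,x\,d\mu\bigr|^2$.
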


\begin{proof} We use the notation from the proof of Theorem  \ref{prop_1944}. From (\ref{eq_1817}) and (\ref{eq_1818}) we obtain
	\begin{align*}
	\int_{\RR^n} \left[ (\nabla^2 \psi) \nabla f \cdot \nabla f \right] d \mu & \leq \lambda^3 \left| \int_{\RR^n} f(x) x d \mu(x) \right|^2 = \lambda^4 \sup_{\theta \in S^{n-1}} \left| \int_{\RR^n} \frac{f(x)}{\sqrt{\lambda}} \langle x, \theta \rangle d \mu(x) \right|^2 \\ & \leq \lambda^4 \sup_{\theta \in S^{n-1}} \| \langle \cdot, \theta \rangle \|_{H^{-1}(\mu)}^2 = \lambda^4 R.
	\end{align*}
	Hence, from (\ref{eq_1502}) we obtain $t \lambda \leq \lambda^4 R$, which yields $\lambda \geq (t / R)^{1/3}$.
\end{proof}

\begin{remark}
	As explained in \cite{BK, KL}, for any $f \in L^2(\mu)$ with $\int_{\RR^n} f d \mu = 0$ we have
	$$ \| f \|^2_{H^{-1}(\mu)} = -\int_{\RR^n} (L^{-1} f) f d \mu, $$
	where here $L$ is the unique self-adjoint extension of the operator defined on $\cF_{\mu}$ via (\ref{eq_1647}).
	Observe that $L x = -\nabla \psi$, and that the vector-valued functions  $\nabla \psi$ and $x$ are $\mu$-tempered. Therefore,
	\begin{equation}
	- \int_{\RR^n} \langle L^{-1} (\nabla \psi), \nabla \psi \rangle d \mu = \int_{\RR^n} \langle x, \nabla \psi \rangle e^{-\psi} = -\sum_{i=1}^n \int_{\RR^n}  x_i \partial_i (e^{-\psi}) = n \cdot \int_{\RR^n} e^{-\psi} = n.
	\label{eq_1636} \end{equation}
	The $H^{-1}$-inequality \cite{BK, K_uncond} states that for any Lipschitz function $f: \RR^n \rightarrow \RR$ with $\int \nabla f d \mu =0$,
\begin{equation}  Var_{\mu}(f) \leq \sum_{i=1}^n \left \| \partial^i f \right \|_{H^{-1}(\mu)}^2 =: \left \| \nabla f \right \|_{H^{-1}(\mu)}^2, \label{eq_2311} \end{equation}
	where $Var_{\mu}(f) = \int f^2 d \mu - (\int f d \mu)^2$. Hence, from (\ref{eq_1636}) and (\ref{eq_2311})
	we obtain an alternative proof of the varentropy inequality of Bobkov and Madiman \cite{BM} and Nguyen \cite{Nguyen}, which is the inequality:
	$$ Var_{\mu}(\psi) \leq \| \nabla \psi \|_{H^{-1}(\mu)}^2 = n. $$
\end{remark}

\bigskip There is more than one way to deduce Theorem \ref{thm2} from the improved log-concave Lichnerowicz inequality. The following variant
of the integrated Bochner formula shows  another connection between  the Laplace operator
associated with $p_0$ and the one associated with $p_t$, in addition
to the connection established in Corollary \ref{cor_1804} above by using $1$-Lipschitz functions.

\begin{proposition}[``Localized Bochner formula''] Let $\mu$ be a probability measure on $\RR^n$ with a regular, log-concave probability density $\rho = e^{-\psi}$.
	Let $u \in \cF_{\mu}, t > 0$, and consider the random probability density $p_t$
	defined in Section \ref{sec3}. Write $\mu_t$ for the regular, log-concave probability measure on $\RR^n$
	whose density is $p_t$ and abbreviate $L_t = L_{\mu_t}$ and $L = L_{\mu}$. Then with probability one, $u \in \cF_{\mu_t}$ and
	$$ \int_{\RR^n} (L u)^2 d \mu + t \int_{\RR^n} |\nabla u|^2 d \mu = \EE \int_{\RR^n} (L_t u)^2 d \mu_t. $$
	\label{prop_1034}
\end{proposition}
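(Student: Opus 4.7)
The plan is to apply the integrated Bochner formula \emph{twice}: once for the random measure $\mu_t$ and once for $\mu$, and to bridge them via the martingale identity $\EE p_t(x) = \rho(x)$ from (\ref{eq_1048}). The key algebraic observation is that, from the definition (\ref{eq_1015}) of $p_{t,\theta}$, the potential $\psi_t := -\log p_t$ differs from $\psi$ by an affine function of $x$ plus $t|x|^2/2$, so
\[
\nabla^2 \psi_t(x) \;=\; \nabla^2 \psi(x) + t \cdot \id,
\]
a deterministic identity in $x$ which does not depend on $\theta_t$.

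Before running the computation I would deal with the functional-analytic setup. First, almost surely $p_t$ is a smooth, positive, regular, log-concave density (indeed $\mu_t$ is $t$-uniformly log-concave by construction, so the lower Hessian bound is automatic, and the upper bound transfers from $\rho$ on any bounded set for $\psi$, with the quadratic term handling growth). Second, I need $u \in \cF_{\mu_t}$ almost surely. The definition (\ref{eq_1030}) of subexponential decay relative to $p_t$ amounts to requiring $|u(x)|\sqrt{p_t(x)} \leq C e^{-a|x|}$. Writing
\[
|u(x)|\sqrt{p_t(x)} \;=\; |u(x)|\sqrt{\rho(x)} \cdot \frac{1}{\sqrt{Z(t,\theta_t)}}\, e^{\langle \theta_t,x\rangle/2 - t|x|^2/4},
\]
the Gaussian factor $e^{-t|x|^2/4}$ absorbs the linear factor $e^{\langle\theta_t,x\rangle/2}$ for any realization of $\theta_t$, so $u\in\cF_\mu$ implies $u\in\cF_{\mu_t}$ (with a random constant). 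The same argument applies to all partial derivatives of $u$.

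Now I apply the integrated Bochner formula (\ref{eq_1531}) to $\mu_t$:
\[
\int_{\RR^n} (L_t u)^2 \, d\mu_t
\;=\; \int_{\RR^n} \|\nabla^2 u\|_{HS}^2 \, d\mu_t + \int_{\RR^n} \langle (\nabla^2\psi)\nabla u, \nabla u\rangle\, d\mu_t + t\int_{\RR^n} |\nabla u|^2 \, d\mu_t,
\]
using the Hessian identity above. Taking expectation and invoking Fubini together with the martingale property $\EE p_t(x)=\rho(x)$ —which is legitimate since each integrand is deterministic and lies in $L^1(\mu)$, so $\EE\int|F|\,p_t = \int|F|\,\rho < \infty$— the right-hand side collapses to
\[
\int_{\RR^n} \|\nabla^2 u\|_{HS}^2 \, d\mu + \int_{\RR^n} \langle (\nabla^2\psi)\nabla u, \nabla u\rangle\, d\mu + t\int_{\RR^n} |\nabla u|^2 \, d\mu.
\]
Applying the Bochner formula (\ref{eq_1531}) for $\mu$ in reverse to the first two terms yields $\int (Lu)^2 \, d\mu + t\int|\nabla u|^2\, d\mu$, completing the proof.

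The main obstacle is not the computation itself—which is essentially a bookkeeping of the Hessian shift—but the verification that the boundary terms in Bochner's formula vanish simultaneously for all (or $\mathbb{P}$-a.e.) realizations of $\theta_t$, together with the integrability required for Fubini. These issues are handled by the subexponential decay transfer sketched above and by noticing that the three deterministic integrands obtained after Bochner ($\|\nabla^2 u\|_{HS}^2$, $\langle \nabla^2\psi\,\nabla u,\nabla u\rangle$, and $|\nabla u|^2$) are all $\mu$-tempered, hence in $L^1(\mu)$, so the swap of $\EE$ and $\int$ is unconditional.
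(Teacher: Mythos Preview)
Your proof is correct and follows essentially the same approach as the paper: apply Bochner for $\mu_t$, use the deterministic Hessian shift $\nabla^2\psi_t = \nabla^2\psi + t\cdot\id$, take expectations with the martingale identity $\EE p_t = \rho$ and Fubini, then apply Bochner for $\mu$ in reverse. Your justification of $u\in\cF_{\mu_t}$ via the Gaussian factor absorbing the linear tilt is a slightly more explicit version of the paper's one-line remark that $d\mu_t/d\mu$ is a Gaussian density.
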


\begin{proof} We know that $d \mu_t / d \mu$ is a Gaussian density, hence the fact that $u \in \cF_{\mu}$ implies
	that $u \in \cF_{\mu_t}$. Write $p_t = e^{-\psi_t}$ and apply the Bochner formula for the measure $\mu_t$ to obtain
	\begin{equation} \int_{\RR^n} (L_t u)^2 d \mu_t = \int_{\RR^n} \| \nabla^2 u \|_{HS}^2 d \mu_t + \int_{\RR^n} \langle (\nabla^2 \psi_t) \nabla u, \nabla u \rangle d \mu_t. \label{eq_1200} \end{equation}
	From formula (\ref{eq_1015}) we know that
	$$ \nabla^2 \psi_t(x) = \nabla^2 \psi(x) + t \cdot \id \qquad \qquad \qquad (x \in \RR^n). $$
	Therefore, by combining (\ref{eq_1200}) with (\ref{eq_1048}) and Fubini's theorem,
	\begin{align*} \EE \int_{\RR^n} (L_t u)^2 d \mu_t & = \EE \int_{\RR^n} \| \nabla^2 u \|_{HS}^2 d \mu_t + \EE \int_{\RR^n} \langle (\nabla^2 \psi) \nabla u, \nabla u \rangle d \mu_t
	+ t \cdot \EE \int_{\RR^n} |\nabla u|^2 d \mu_t
	\\ & = \int_{\RR^n} \| \nabla^2 u \|_{HS}^2 d \mu + \int_{\RR^n} \langle (\nabla^2 \psi) \nabla u, \nabla u \rangle d \mu + t \cdot \int_{\RR^n} |\nabla u|^2 d \mu.
	\end{align*}
	The proposition now follows from the Bochner formula (\ref{eq_1531}) applied for the measure $\mu$.
\end{proof}

\section{Appendix}

Write $\gamma_s$ for the density of a Gaussian random vector of mean zero and covariance $s \cdot \id$ in $\RR^n$,
i.e., $\gamma_s(x) = (2 \pi s)^{-n/2} \exp(-|x|^2 / (2s))$ for $x \in \RR^n$. The following lemma allows
us to interchange Gaussian convolution and multiplication by a Gaussian density.

\begin{lemma}  For $f: \RR^n \rightarrow \RR$ and $r > 0$ denote $S_r f(x) = r^n f(r x)$, the scaling of $f$ when viewed as a density. Let $s, t > 0$
	and set
	$$ p = \frac{st}{s+t}, \qquad q = \frac{t^2}{s+t} \qquad \text{and} \qquad r = \frac{t}{s+t}. $$
	Then for any measurable function $f: \RR^n \rightarrow \RR$ such that $f \gamma_t$ is integrable,
	$$
	(f \gamma_t) * \gamma_s = S_r \left[ (f * \gamma_p)  \gamma_q  \right].
	$$
	\label{lem_1721}
\end{lemma}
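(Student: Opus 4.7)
The plan is a direct computation using the Gaussian product identity. Both sides of the desired equation are, pointwise in $x$, manipulations of Gaussian densities, so the proof reduces to rewriting the quadratic forms in the exponents and reconciling the normalizing constants.

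The heart of the matter is the pointwise factorization
$$ \gamma_t(y)\, \gamma_s(x-y) \; = \; r^n\, \gamma_p(y - rx)\, \gamma_q(rx) \qquad (x,y \in \RR^n). \qquad (\ast) $$
Once $(\ast)$ is established, the rest is automatic: substituting into
$$ ((f\gamma_t)*\gamma_s)(x) \; = \; \int_{\RR^n} f(y)\, \gamma_t(y)\, \gamma_s(x-y)\, dy $$
and pulling $r^n \gamma_q(rx)$ out of the integral (using the symmetry $\gamma_p(y-rx) = \gamma_p(rx-y)$) produces
$$ r^n\, \gamma_q(rx)\, (f * \gamma_p)(rx) \; = \; r^n \bigl[ (f*\gamma_p)\gamma_q \bigr](rx) \; = \; S_r\bigl[ (f*\gamma_p)\gamma_q \bigr](x), $$
which is exactly the right-hand side of the claim.

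To establish $(\ast)$, I would verify the exponential part and the normalizing constants separately. For the exponents, set $\alpha := 1/t + 1/s$; one immediately has $1/\alpha = p$ and $1/(s\alpha) = t/(s+t) = r$. Completing the square in $y$ in
$$ -\frac{|y|^2}{2t} - \frac{|x-y|^2}{2s} \; = \; -\frac{\alpha}{2} |y|^2 + \frac{1}{s} \langle x,y\rangle - \frac{|x|^2}{2s} $$
produces $-|y - rx|^2/(2p)$ plus a residual quadratic in $x$ alone, which a short algebraic simplification (using $r^2/q = 1/(s+t)$) identifies as $-|rx|^2/(2q)$. For the normalizing constants, the prefactor ratio equals $(pq/(ts))^{n/2}$, and one checks $pq/(ts) = t^2/(s+t)^2 = r^2$, yielding exactly the factor $r^n$.

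There is no real obstacle here; it is essentially a bookkeeping exercise. The only point requiring mild care is that $f$ is only assumed measurable with $f\gamma_t$ integrable, so one should remark that every integral appearing in the computation is absolutely convergent, which follows directly from $(\ast)$ together with the integrability hypothesis (the integrand on the right-hand side of $(\ast)$ times $|f|$ is a non-negative rearrangement of the integrand on the left).
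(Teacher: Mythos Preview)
Your proof is correct and takes essentially the same approach as the paper: a direct computation by completing the square in the exponent of the product $\gamma_t(y)\gamma_s(x-y)$ and matching normalizing constants. The only cosmetic difference is that you first isolate the pointwise Gaussian identity $(\ast)$ before integrating, whereas the paper carries the integral along throughout; in particular your $rx$ is exactly the paper's $X = px/s$.
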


\begin{proof} Let $x \in \RR^n$ and set $X = px / s$. Then,
	\begin{align*}
	[(f \gamma_t) * \gamma_s](x) & = (2 \pi)^{-n} (st)^{-n/2} \int_{\RR^n} f(y) e^{-\frac{|y|^2}{2t} - \frac{|x-y|^2}{2s}} dy
	\\ & = (2 \pi)^{-n} (st)^{-n/2} e^{\frac{|x|^2 (-1/s + p/s^2)}{2}} \int_{\RR^n} f(y) e^{-\frac{|y - px/s|^2}{2p} } dy
	\\ & = (2 \pi)^{-n} (st)^{-n/2} e^{-\frac{|X|^2}{2q}} \int_{\RR^n} f(y) e^{-\frac{|y - X|^2}{2p} } dy
	= r^n \left[ (f * \gamma_p) \gamma_q \right](X).
	\end{align*}
\end{proof}

\begin{corollary} Let $s, t > 0$, and let $\mu$ be an absolutely-continuous
	probability measure on $\RR^n$ which is $1/t$-uniformly log-concave.
	Then the convolution $ \mu * \gamma_s$ is $1/(t+s)$-uniformly log-concave.
	\label{cor_1008}
\end{corollary}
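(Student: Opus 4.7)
The plan is to invoke Lemma \ref{lem_1721} after recasting the density of $\mu$ as (log-concave)$\,\times\,\gamma_t$, and then to track how the scaling operator $S_r$ transforms the Hessian of the negative log-density. The Pr\'ekopa-Leindler inequality supplies log-concavity of an intermediate convolution, and a short computation of $r^2/q$ yields the announced modulus $1/(t+s)$.

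Concretely, let $h$ denote the density of $\mu$ and set $g := h/\gamma_t$. The assumption that $\mu$ is $1/t$-uniformly log-concave says that $h(x) e^{|x|^2/(2t)}$, and therefore $g$, is a log-concave function on $\RR^n$. Since $g \gamma_t = h$ is integrable, Lemma \ref{lem_1721} applies with $f = g$ and yields
\[ (\mu * \gamma_s)(x) \;=\; (g \gamma_t) * \gamma_s (x) \;=\; S_r\bigl[(g * \gamma_p)\, \gamma_q\bigr](x), \]
where $p = st/(s+t)$, $q = t^2/(s+t)$ and $r = t/(s+t)$. By Pr\'ekopa-Leindler, $g * \gamma_p$ is log-concave, so the smooth function $(g * \gamma_p)\gamma_q$ satisfies
\[ \nabla^2\bigl(-\log\bigl[(g * \gamma_p)\gamma_q\bigr]\bigr) \;=\; \nabla^2\bigl(-\log(g * \gamma_p)\bigr) + \tfrac{1}{q}\,\id \;\geq\; \tfrac{1}{q}\,\id. \]

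Finally, the identity $-\log(S_r k)(x) = -n\log r - \log k(rx)$ implies that the Hessian of $-\log(\mu * \gamma_s)$ at $x$ equals $r^2$ times the Hessian of $-\log[(g * \gamma_p)\gamma_q]$ at $rx$, so it is bounded below by $(r^2/q)\,\id$. A direct computation gives
\[ \frac{r^2}{q} \;=\; \frac{t^2/(s+t)^2}{t^2/(s+t)} \;=\; \frac{1}{s+t}, \]
so $\nabla^2(-\log(\mu * \gamma_s)) \geq \id/(s+t)$ pointwise, which is exactly the $1/(t+s)$-uniform log-concavity asserted. The only real challenge is parameter bookkeeping; the substantive inputs are Lemma \ref{lem_1721}, log-concavity of $g * \gamma_p$ via Pr\'ekopa-Leindler, and the scaling formula for $S_r$.
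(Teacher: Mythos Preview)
Your proof is correct and follows essentially the same approach as the paper: write the density of $\mu$ as $g\gamma_t$ with $g$ log-concave, apply Lemma \ref{lem_1721}, use Pr\'ekopa--Leindler on $g*\gamma_p$, and compute the modulus. The paper phrases the last step as the Gaussian identity $S_r\gamma_q=\gamma_{q/r^2}=\gamma_{s+t}$, while you equivalently compute the Hessian scaling $r^2/q=1/(s+t)$; these are the same computation.
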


\begin{proof} We say that $f$ is more log-concave than $\gamma_t$
	if $f / \gamma_t$ is log-concave, i.e., if $f$ is $1/t$-uniformly log-concave.
	Write $\rho \gamma_t$ for the density of $\mu$, thus $\rho$ is log-concave.
	Define $p,q,r$ as in Lemma \ref{lem_1721}. By the Pr\'ekopa-Leindler inequality,
	the function $\rho * \gamma_p$ is log-concave, where the convolution is well-defined by Lemma \ref{lem_1721}.  The density of $\mu * \gamma_s$ is  $(\rho \gamma_t) * \gamma_s$, which by Lemma \ref{lem_1721} is more log-concave than
	$$ S_r \gamma_q = \gamma_{s+t}. $$
	Indeed, we compute that
	$$ S_r \gamma_q(x) = (2 \pi q/r^2)^{-n/2} \exp(-|r x|^2/(2 q)) = \gamma_{q / r^2} = \gamma_{s+t}. $$
\end{proof}

\begin{proof}[Proof of Lemma \ref{lem_approx}]
	Let $0 < \delta < 1$ be a sufficiently small number, and write $\rho$ for the log-concave density of $\mu$.
	The  measure $\nu = \nu_{\delta}$ is obtained by convolving $\rho$
	with $\gamma_{\delta}$, then multiplying the resulting  density by $\gamma_{1/\delta}$, and then normalizing to obtain a probability density. The Prekop\'a-Leindler inequality shows that $\nu$ is a log-concave measure, and
	$\nu$ clearly has a smooth, positive
	density $e^{-\psi_{\delta}}$. We claim that
	\begin{equation} \delta \leq \nabla^2 \psi_{\delta}(x) \leq \delta + 1/\delta  \qquad \qquad \text{for all} \ x \in \RR^n.
	\label{eq_947} \end{equation}
	The left-hand side inequality in (\ref{eq_947}) follows from the fact that
	$\nu$ is obtained by multiplying a log-concave density by $\gamma_{1/\delta}$ and then normalizing.
	For the right-hand side inequality, we write
	\begin{equation}  e^{-\psi_{\delta}(x)} = e^{-(\delta + 1 / \delta) |x|^2 / 2 } \int_{\RR^n} e^{\frac{\langle x,y \rangle}{\delta} -\frac{|y|^2}{2 \delta}} \rho(y) d y / Z_\delta \label{eq_952} \end{equation}
	where $Z_{\delta} > 0$ is a normalizing constant. The logarithm of the integral in (\ref{eq_952}) is a convex function of $x$, leading
	to the right-hand side inequality in (\ref{eq_947}). The fact that the function $\psi_{\delta}$,
	as well as each of its partial derivatives, grows at most polynomially at infinity
	is proven in \cite[Lemma 2.2]{KP}. Thus $\nu_{\delta}$ is a regular, log-concave probability measure for any $0 < \delta < 1$.
	Next, consider a smooth function $f$ with
	$$ \int_{\RR^n} f^2 d \mu - \left(\int_{\RR^n} f d \mu \right)^2 \geq (C_P(\mu) - \eps/2) \int_{\RR^n} |\nabla f|^2 d \mu. $$
	By the dominated convergence theorem we may replace the integrals with respect to $\mu$ by integrals with respect to $\nu_{\delta}$
	while accumulating only a tiny error. This shows that for a sufficiently small $\delta > 0$,
	$$ C_P(\nu_{\delta}) \geq C_P(\mu) - \eps. $$  The dominated
	convergence theorem also shows that the covariance matrix of $\nu = \nu_{\delta}$ can be made arbitrarily close to that of $\nu$, for a sufficiently small
	$\delta > 0$. Finally, if $\mu$ is $t$-uniformly log-concave, then Corollary \ref{cor_1008} implies
	that $\nu_{\delta}$ is $[\delta + t / (1 +t \delta)]$-uniformly log-concave.
\end{proof}

\begin{proof}[Proof of Lemma \ref{lem_tempered}]
	Let $\vphi_k \in L^2(\mu)$ be an eigenfunction of $L_{\mu}$ corresponding to some eigenvalue $-\lambda_k$. Since $L$ is an elliptic operator, the function $\vphi_k$ is smooth.
	As in Remark 2.7 in Klartag and Putterman \cite{KP}, for any $k \geq 0$ the smooth function $\tilde{\vphi}_k = \vphi_k \cdot \sqrt{\rho} \in L^2(\RR^n)$ is an eigenfunction
	of the Schr\"odinger operator $-\Delta + V$ for $$ V = |\nabla \psi|^2 / 4 - \Delta \psi / 2, $$
	where we recall that $\rho = e^{-\psi}$.  The potential $V(x)$ is a smooth
	function that tends to infinity as $x \rightarrow \infty$. In fact, since $\mu$ is regular then
	$$ V(x) \geq \eps |x| - 1/\eps $$ for some $\eps > 0$ depending on $\mu$.
	It is known  that any eigenfunction $\tilde{\vphi}_k$ of the Schr\"odinger operator $-\Delta + V$  decays exponentially at infinity,
	see Agmon \cite[Chapter 5]{agmon}, Carmona \cite{carmona}, Combes and Thomas \cite{CT}, O'Connor \cite{O} or Reed and Simon \cite[Theorem XIII.70]{RS}.
	Moreover,
	since $\mu$ is regular, the potential $V$ and each of its partial derivatives
	grows at most polynomially at infinity. Hence both  functions $\tilde{\vphi}_k$ and $\Delta \tilde{\vphi}_k = (V - \lambda_k) \tilde{\vphi}_k$ decay exponentially at infinity.
	This implies that each partial derivative
	of $\tilde{\vphi}_k$ decays exponentially at infinity;
	to see this, consider a unit ball centered at a faraway point $x \in \RR^n$, and express $\tilde{\vphi}_k$
	via Green's representation for solutions to the Poisson equation in this unit ball (e.g. \cite[Section 2.2.4]{evans}).  For any multi-index $\alpha = (\alpha_1,\ldots,\alpha_n)$
	of non-negative integers, in the notation of \cite[Section 2]{KP},
	\begin{equation} \partial^{\alpha} \vphi_k =  \partial^{\alpha} (\tilde{\vphi}_k e^{\psi/2})
	= \sum_{0 \leq \beta \leq \alpha} {\alpha \choose \beta} \partial^{\beta} \tilde{\vphi}_k \cdot \partial^{\alpha - \beta} e^{\psi/2},
	\label{eq_1037} \end{equation}
	where we used the Leibnitz rule. Each of the summands in (\ref{eq_1037}) is some coefficient multiplied by the product of three factors: the exponentially decaying function
	$\partial^{\beta} \tilde{\vphi}_k$, a certain expression involving derivatives of $\psi$ that grows at most polynomially, and
	the function $e^{\psi / 2}$. This implies that $\vphi_k \in \cF_\mu$.
\end{proof}


\end{document}